\documentclass[table,x11names]{amsart}
 
\newif\ifshort
%\shorttrue
\shortfalse

\ifshort
\usepackage[top=1.1in, bottom=1.1in, left=1.3in, right=1.3in]{geometry}
\pagestyle{plain}
\fi

\usepackage{va, styell}

\usepackage{bm}
\usepackage{float}
\usepackage{tikz}
\usepackage{tikz-cd}

\usepackage[pdftex]{hyperref}
\hypersetup{colorlinks,citecolor=blue,linktocpage,hyperindex=true,backref=true}
\allowdisplaybreaks

\usepackage{etoolbox}

\makeatletter
\pretocmd{\section}{\addtocontents{toc}{\protect\addvspace{5\p@}}}{}{}
\makeatother

\usepackage{todonotes}
 %valery
 %valery inline

\date{August 12, 2022}

\title{Periods of elliptic surfaces with $p_g=q=1$}
\author{Philip Engel}
\email{philip.engel@uga.edu}
\address{Department of Mathematics, University of Georgia, Athens GA
  30602, USA}
  \author{Fran\c{c}ois Greer}
\email{greerfra@msu.edu}
\address{Department of Mathematics, Michigan State University, East Lansing MI 48824, USA}
  \author{Abigail Ward}
\email{arw204@cam.ac.uk}
\address{Centre for Mathematical Sciences, Cambridge University, Cambridge CB3 0WA, UK}

\begin{document}
%\numberwithin{equation}{section}

\begin{abstract}
We prove that the period mapping is dominant for elliptic surfaces
over an elliptic curve with $12$ nodal fibers, and that its
degree is larger than $1$. This settles the final case of infinitesimal Torelli for a generic elliptic surface.
 \end{abstract}

\maketitle

%\ifshort 
%\setcounter{tocdepth}{1}
%\else
%\setcounter{tocdepth}{2}  
%\fi
%\tableofcontents

%\listoffigures
% \listoftables

\section{Introduction}
\label{sec:introduction}

In order to distinguish smooth projective varieties varying in a family with continuous parameters, it is often useful to integrate the holomorphic forms over topological cycles. This idea was used to great effect classically to distinguish smooth curves of a given genus $g>0$. A modern reformulation of this problem in higher dimension asks whether the period mapping from a moduli space of varieties to an associated space of {\it periods} is injective, either locally or globally on the source. We will show that while the local injectivity statement is true generically, the global statement fails for an important class of elliptic surfaces.

An {\it elliptic surface} is a smooth, projective surface $S$ equipped with a relatively 
minimal, genus one fibration $\pi\colon S\to C$ to a smooth curve and a distinguished  
section $s$. Moduli spaces $F_{g,d}$ of elliptic surfaces are indexed by two non-negative
integers, $g=g(C)$ and $d=\tfrac{1}{12}\chi_{\rm top}(S)$. Counted with 
multiplicity, there are $12d$ singular fibers. The canonical bundle of $S$ is pulled
back from a line bundle $L\otimes \omega_C$ of degree $d+2g-2$ on $C$.
We henceforth assume $d>0$ (that is, $S$ has at least one singular fiber) so that $p_g(S):=h^0(K_S)=g+d-1$.

In this paper, we focus on the moduli space $F:=F_{1,1}$.
Since $g(C)=1$, $K_S=\pi^*L$ for a degree $1$ line bundle
$L=\cO_C(p)$, and generically the fibration $\pi$ has $12$ singular fibers.
There is a morphism $S\to \oS$ contracting ADE configurations in
fibers not intersecting the section $s$. This contraction has a Weierstrass form \cite{kas1977weierstrass}
$$\oS = \{y^2 = x^3+ax+b\}\subset \mathbb{P}_C(L^2\oplus L^3\oplus \cO)$$ where 
$a\in H^0(C,L^4)$ and $b\in H^0(C,L^6)$. A quick parameter count
reveals that $\dim F = 1 + 4 + 6 - 1=10$ where the parameters are, respectively,
the $j$-invariant of $(C,p)$, the section $a$, the section $b$, and the quotient by the
action of $\lambda\in\C^*$ via $(a,b)\mapsto (\lambda^4a,\lambda^6b)$.

Noether's formula implies that the Hodge numbers of $S$
are $h^{2,0}(S)=h^{1,0}(S)=1$ and $h^{1,1}(S)=12$.
The Neron-Severi group $\NS(S)=H^{1,1}(S,\C)\cap H^2(S,\Z)$ always contains the
classes of the fiber $f$ and section $s$ which have intersection numbers $s^2=-1$, $s\cdot f=1$, $f^2=0$.
Hence, there is a copy of the odd unimodular lattice $$I_{1,1}\simeq \Z s\oplus \Z(s+f)
\subset \NS(S).$$ Its orthogonal complement $\{s,f\}^\perp\subset H^2(S,\Z)$ is 
an even (since $[K_S]=f$), unimodular lattice of signature $(2,10)$, so it is isometric to $I\!I_{2,10}=H\oplus H\oplus E_8$.

Let $\Gamma:=O(I\!I_{2,10})$ and define the {\it period domain} to be $$\bD:=\mathbb{P}
\{x\in I\!I_{2,10}\otimes \C\,\big{|}\, x\cdot x=0,\, x\cdot \bar x>0\}.$$ It is a ten-dimensional Type IV 
Hermitian symmetric domain. By general results of Griffiths \cite{griffiths1968periods}, there is a
holomorphic {\it period map} $P\colon F\to \bD/\Gamma$ sending $[S]\in F$ to the line 
$H^{2,0}(S)\subset \{s,f\}^\perp\otimes \C$. This map is only well-defined mod $\Gamma$ 
since the isometry $\{s,f\}^\perp\to I\!I_{2,10}$ is ambiguous up to post-composition by an 
element of $\Gamma$. We may now state the first theorem of the paper:

\begin{theorem}\label{thm:main} $P$ is dominant. \end{theorem}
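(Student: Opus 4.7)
Since both $F$ and $\bD/\Gamma$ are smooth of dimension $10$, dominance of $P$ is equivalent to the differential $dP$ being surjective at a single point $[S]\in F$ (and hence, by upper semicontinuity of its rank, at a generic one). By Griffiths, $dP$ decomposes as
\[
T_{[S]}F \xrightarrow{\mathrm{KS}} H^1(S,T_S) \xrightarrow{\;\smile\,\omega_S\;} \mathrm{Hom}\bigl(H^{2,0}(S),\,(\{s,f\}^\perp\otimes\C)\cap H^{1,1}(S)\bigr),
\]
where $\omega_S$ generates $H^{2,0}(S)$ and the first arrow is Kodaira-Spencer. The plan is therefore to exhibit one $[S]$ at which the $10$-dimensional source maps isomorphically onto the $10$-dimensional target.

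The natural tool is the Weierstrass presentation $\overline{S}=\{y^2=x^3+ax+b\}\subset\mathbb{P}_C(L^2\oplus L^3\oplus\cO)$. Near a generic $[S]$, the tangent space $T_{[S]}F$ has a concrete basis consisting of an infinitesimal change of the $j$-invariant of $(C,p)$ together with pairs $(\delta a,\delta b)\in H^0(C,L^4)\oplus H^0(C,L^6)$, modulo the diagonal $\C^*$-action. Via the Griffiths residue calculus for the Weierstrass hypersurface (equivalently, the Leray filtration on $H^2(S,\C)$, whose graded pieces are cohomology groups of twists of $L$ on $C$), these tangent directions lift explicitly into $H^1(S,T_S)$, and cup product with $\omega_S$ is realized by multiplication in the Jacobian-type ring of the defining polynomial, or equivalently by multiplication of sections of line bundles on $C$.

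The problem then reduces to a concrete linear-algebra check: verifying that the images of ten linearly independent tangent directions span the $10$-dimensional target $\mathrm{Hom}$-space. The main obstacle is the non-degeneracy of these multiplication maps: because $L$ has degree only $1$ on the elliptic base and $h^0(C,L)=1$, the relevant multiplication maps have low rank a priori, and one must rule out collapse onto a proper subspace. I would settle this either by executing the calculation for a well-chosen specialization of $(j,a,b)$ and invoking upper semicontinuity, or by passing to the degree-$4$ base change $\widetilde{S}:=[2]^*S\to C$, which is an elliptic K3 surface with $24$ nodal fibers carrying a free translation action by $\mu:=C[2]\cong(\Z/2)^2$ whose $\mu$-invariant Hodge structure on $H^2$ recovers that of $S$. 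This would allow the global surjectivity of the K3 period map to be leveraged, reducing the question to analyzing the restricted Hodge bundle on a natural subvariety of the K3 moduli space.
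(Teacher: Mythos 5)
Your reduction (dominance $\Leftrightarrow$ $dP$ surjective at one point, since source and target are both $10$-dimensional) is fine, but what follows is a plan rather than a proof, and the plan breaks down exactly at the step you flag as "the main obstacle." Ruling out collapse of the multiplication maps in the Weierstrass/Jacobian-ring picture is the entire content of the theorem: this is precisely where Saito's 1983 argument has its gap, where the Ki\u{\i}--Lieberman--Peters criterion used by Kloosterman fails (it needs $d\neq 1$ or $h^0(C,L)=0$, while every point of $F_{1,1}$ has $d=h^0(C,L)=1$), and why Kloosterman conjectured that infinitesimal Torelli is \emph{false} here. Moreover, rank drop really does occur on large subvarieties (Ikeda's four-dimensional family, and the six-dimensional isotrivial family $F^{\rm cusp}$ in the paper), so "a well-chosen specialization plus upper semicontinuity" is not a routine check that can be left unexecuted; no such interior computation is carried out in your proposal. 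Your fallback is also unsound: the degree-$4$ base change $[2]^*S\to C$ is an elliptic surface over the elliptic curve $C$ with $\chi_{\rm top}=48$ and $p_g=4$, not a K3 with $24$ nodal fibers; and no K3 surface carries a free $(\Z/2)^2$-action (a free involution already forces an Enriques quotient with $p_g=0$), so there is no K3 period map to leverage.

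The paper avoids the interior computation altogether by degenerating: it passes to Type II$_b$ surfaces obtained by gluing two smooth fibers of a rational elliptic surface, where the limit period map lands in the boundary divisor $\Delta\simeq E_8\otimes\cE$ of a toroidal extension and becomes the completely explicit map $\gamma\mapsto\{\cO_D(p_i-q_i)\}_{i=1}^9$ on $PGL_3(\C)$ (pencils of plane cubics). Dominance there is proved by an elementary fixed-point/torsion argument at the extremal pencil $\lambda(x^2y+y^2z+z^2x)+\mu xyz$ with $\gamma={\rm diag}(1,\zeta_3,\zeta_3^2)$, and dominance of $P$ follows since ${\rm im}(P^{\rm II})$ lies in the closure of ${\rm im}(P)$. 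If you want to salvage your route, you would need to actually exhibit a surface $S$ and verify the rank-$10$ statement for the multiplication maps (in effect proving the infinitesimal Torelli statement that the existing literature could not), which is a substantially harder computation than the boundary check the paper performs.
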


\begin{remark}
For surfaces $S$ with $h^{2,0}(S)\geq 2$, the associated period map {\it cannot} be
dominant due to Griffiths' transversality. The general member $S\in F_{g,d}$
satisfies $h^{2,0}(S)= 1$ only when $(g,d)=(1,1)$ or $(g,d)=(0,2)$.
In the latter case, the surfaces under consideration are elliptic K3 surfaces. By the Torelli
theorem for K3 surfaces \cite{piateski-shapiro1971torelli, looijenga1980torelli},
the period mapping gives an isomorphism onto the corresponding period space.
\end{remark}

A {\it local}, respectively ~{\it infinitesimal}, Torelli theorem verifies the local injectivity of $P$,
respectively injectivity of $dP$, at some point. Such a result implies
that $P$ is generically finite onto its image. A {\it generic} Torelli theorem
further proves that $P$ is generically one-to-one onto its image.
Finally, a {\it global} Torelli theorem implies that
$P$ is an embedding, or an isomorphism if the dimensions are appropriate.
We prove that, unlike for K3 surfaces,

\begin{theorem}\label{thm:main2} $\deg P>1$. Thus, generic Torelli is false
for $P\colon F\to \bD/\Gamma$. \end{theorem}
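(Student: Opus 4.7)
The plan is to factor the period map through a finer quotient of the period domain. Writing $\Gamma_{\mathrm{mon}} \subseteq \Gamma = O(I\!I_{2,10})$ for the image of the monodromy representation of the universal family over the smooth locus $F^{\circ} \subseteq F$, we obtain a factorization $P = q \circ \tilde P$ with
\[
\tilde P \colon F \longrightarrow \bD/\Gamma_{\mathrm{mon}}, \qquad q \colon \bD/\Gamma_{\mathrm{mon}} \longrightarrow \bD/\Gamma,
\]
and $\deg q = [\Gamma : \Gamma_{\mathrm{mon}}]$. Since $P$ is dominant (Theorem \ref{thm:main}) with $\dim F = \dim \bD/\Gamma$, both maps are generically finite and $\deg P = \deg \tilde P \cdot [\Gamma : \Gamma_{\mathrm{mon}}]$. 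It thus suffices to show the strict containment $\Gamma_{\mathrm{mon}} \subsetneq \Gamma$.

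The guiding heuristic is that the condition $h^{1,0}(S) = 1$ endows the Hodge structure on $\{s,f\}^\perp$ with an intrinsic orientation inherited from the Leray filtration for $\pi\colon S \to E = \operatorname{Alb}(S)$, manifested by the distinguished subquotient $H^1(E, R^1\pi_*\Z)$ of $H^2(S)$ --- in contrast with elliptic K3 surfaces, where no such extra datum exists. This orientation is preserved by every monodromy, but it is not preserved by all of $\Gamma$. A natural candidate for $\Gamma_{\mathrm{mon}}$ is therefore an orientation-preserving or spinor-norm subgroup of $O(I\!I_{2,10})$ of index at least $2$.

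Concretely, I would first compute the local Picard--Lefschetz contributions to $\Gamma_{\mathrm{mon}}$ by analyzing degenerations along the discriminant divisor of $F$ (where two of the $12$ nodal fibers collide, or a nodal fiber acquires a cusp). These are reflections in explicit vanishing classes, and all should preserve the candidate invariant. Second, I would account for the "global" monodromy coming from the variation of the base elliptic curve $(C,p)$ in its $1$-dimensional moduli, and verify that these contributions likewise preserve the invariant. Taken together, this would give $\Gamma_{\mathrm{mon}} \subsetneq \Gamma$.

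The main obstacle is to rule out that an explicit element of $\Gamma$ --- for instance, $-\operatorname{id}$ or a carefully chosen reflection --- arises as a monodromy of the family. This requires pinpointing a Hodge-theoretic invariant preserved by every monodromy (plausibly an orientation on the real positive-definite $2$-plane $\operatorname{Re}(H^{2,0}) \oplus \operatorname{Im}(H^{2,0})$ compatible with the base-curve data) and then verifying that the candidate element fails to preserve it. Identifying this invariant --- and showing that the geometric and Hodge-theoretic sources of monodromy are all constrained by it --- is the crux of the argument.
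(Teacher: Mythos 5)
There is a genuine gap, and it is fatal to the whole strategy rather than a fixable step. Your plan rests on finding a strict containment $\Gamma_{\mathrm{mon}}\subsetneq\Gamma$ that produces degree, but the paper's remark following the theorem already cites L\"onne's result: the monodromy group of the universal family over $F$ is the \emph{full} subgroup $O^+(I\!I_{2,10})\subset O(I\!I_{2,10})$ of isometries preserving the connected component of $\bD$. The only room left between $\Gamma_{\mathrm{mon}}$ and $\Gamma$ is therefore the index-$2$ component-swapping quotient, and that index cannot be converted into degree of $P$: since $\bD$ has two components interchanged by $\Gamma$, the quotient $\bD/\Gamma$ is already $\bD^+/O^+(I\!I_{2,10})$, so $\bD/\Gamma_{\mathrm{mon}}$ is just two disjoint copies of $\bD/\Gamma$, the lift $\tilde P$ lands in one of them, and $q$ restricted to that component is an isomorphism. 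Equivalently, your candidate invariant --- the orientation of the positive-definite $2$-plane $\operatorname{Re}(H^{2,0})\oplus\operatorname{Im}(H^{2,0})$ --- is exactly the datum that selects a component of $\bD$; it is preserved by \emph{every} monodromy of \emph{every} family and carries no information about this particular moduli problem. Also note that your degree formula $\deg P=\deg\tilde P\cdot[\Gamma:\Gamma_{\mathrm{mon}}]$ silently assumes $\tilde P$ is dominant onto an irreducible $\bD/\Gamma_{\mathrm{mon}}$; when the intermediate quotient is disconnected the correct factor is the degree of $q$ on the closure of $\operatorname{im}\tilde P$, which here is $1$. Finally, the hoped-for constraint from the Albanese/Leray subquotient $H^1(C,R^1\pi_*\Z)$ cannot cut the monodromy down either, again by L\"onne: that filtration is not monodromy-invariant over all of $F$, and the monodromy group is as large as it can possibly be.

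The paper's actual mechanism is different in kind: it produces two geometrically distinct boundary families --- Type II$_b$ degenerations, where the base degenerates and $j(C)\to\infty$, and Type II$_f$ degenerations, where the fibers degenerate and the base stays fixed --- and shows (Theorems \ref{surj} and \ref{surj2}) that \emph{both} limiting period maps dominate the same boundary divisor $\Delta\simeq E_8\otimes\cE$ of the toroidal extension $(\bD/\Gamma)^{\rm II}$. Since a generic point of $\Delta$ is then approached by smoothings of two non-isomorphic (indeed, differently degenerate) central fibers, the period map has at least two sheets, i.e.\ $\deg P>1$. If you want to salvage a proof along independent lines, you would need a construction of distinct surfaces with equal periods (or distinct degenerations with equal limiting periods), not a group-theoretic factorization, which L\"onne's theorem forecloses.
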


\begin{remark} By a result of L\"{o}nne \cite{lonne}, the monodromy representation for the universal family over $F$ is the subgroup of $O(I\!I_{2,10})$ preserving the connected component of $\bD$, so $P$ does not factor through $\bD /\Gamma'$ for any subgroup $\Gamma'\subset \Gamma$.\end{remark}\vspace{5pt}

To prove Theorem \ref{thm:main}, we employ a degeneration argument, similar to Friedman's
proof \cite{friedman1984a-new-proof} of the Torelli theorem for K3 surfaces. First
degenerate the base curve $C$ to a nodal curve $C_0$ formed from gluing two points on $\mathbb{P}^1$.
An elliptic fibration $S\to C$ may be degenerated to an elliptic fibration $S_0\to C_0$,
and the simplest case is when the fiber over the node of $C_0$ is smooth.
Normalizing, $$S_0^\nu=X\to \mathbb{P}^1=C_0^\nu$$ is an elliptic fibration with $(g,d)=(0,1)$, that is, a rational elliptic surface.
To reconstruct $S_0$ from $X$ we glue two smooth fibers $X_p$ and $X_q$ for $p,q\in \mathbb{P}^1$
in such a way that a section of $X\to \bP^1$ is glued to form a section of $S_0 \to C_0$.

The period map for such singular surfaces $S_0$ does not land in $\bD/\Gamma$, but
maps into the boundary divisor $\Delta$
of a toroidal extension $\bD/\Gamma\hookrightarrow (\bD/\Gamma)^{\rm II}$. 
It suffices to prove that the boundary period map
$P^{\rm II}\colon \{\textrm{moduli of }S_0\}\to \Delta$
is dominant. We find an explicit surface $S_0$ for which
any deformation of its period deforms its moduli. Thus $P^{\rm II}$
has at least one fiber containing a $0$-dimensional component,
implying dominance of $P^{\rm II}$, and in turn, $P$. \vspace{5pt}

To prove Theorem \ref{thm:main2}, we describe a second type of degeneration of $S\to C$,
to a fibration $S_0\to C$ (here the base stays constant) whose generic fiber is a nodal curve.
We analyze the limiting period mapping for these surfaces, and prove that they too map
dominantly into the boundary divisor $\Delta$. Since two different degenerations dominate the same divisor $\Delta$, we obtain that $\deg P>1$.

Our method of proof suggests an interesting conjecture. Each surface $S\in F$ contains two natural elliptic curves meeting at a point: the unique representative of the canonical class $K_S$ and the marked section curve $s$. The degenerations we employ in the proof leave one of these curves fixed and degenerate the other to a nodal curve. Conjecture \ref{fmconj} describes a birational involution of $F$, which commutes with the period mapping, and swaps the roles of the two natural elliptic curves.

\subsection*{History of the result}
In 1983, M.-H. Saito \cite{saito1983torelli} claimed to prove the following
infinitesimal Torelli theorem for elliptic surfaces: the differential $dP$ is injective
if the $j$-invariant map $j\colon C\to \mathbb{P}^1_j$ is non-constant, and $h^{2,0}(S) = g+d-1>0$.
However, in 2019, Ikeda \cite{ikeda2019bielliptic}
found a four-dimensional family $\cB \subset F_{1,1}$ for which
$P\big{|}_\cB$ has three-dimensional image, despite the general member of $\cB$ having
non-constant $j$-map. Thus, \cite{saito1983torelli} has a gap, but the proof still works when $\omega_S$
is basepoint free. Observe that $\omega_S \simeq \pi^*(L\otimes \omega_C)$ is basepoint free for all $S\in F_{g,d}$ when $g>0$ and $d>1$, and $\omega_S$ is basepoint free for generic $S\in F_{g,d}$ when $g>1$ and $d=1$. The only cases where $\omega_S$ fails to be basepoint free for generic $S$ are $(g,d)=(1,1)$ and $(g,d)=(0,1)$. The latter is the case of rational elliptic surfaces, where the period map is trivial.

In 2020, R. Kloosterman \cite{kloosterman2022infinitesimal} independently proved
that the infinitesimal Torelli theorem holds for elliptic surfaces with non-constant $j$-map when $d\neq 1$, or when $d=1$ and $h^0(C,L)=0$.
The techniques generalized those of Ki\u{\i} \cite{kii1978torelli} and Lieberman-Wilsker-Peters
\cite{lieberman1977torelli} from the $g=0$ case. Conversely, Kloosterman conjectured
\cite[Conj.~6.1]{kloosterman2022infinitesimal} that when
$d=h^0(C,L)=1$, the infinitesimal Torelli theorem is false.
But this condition holds at every point of $F_{1,1}$, so our Theorem \ref{thm:main} proves that Kloosterman's conjecture is, in fact, false. 

Regarding a generic Torelli theorem, Chakiris \cite{chakiris1982torelli} proved
that generic Torelli holds in the $g=0$, $d\geq 2$ case. Recently, Shepherd-Barron
\cite{shepherdbarron2020generic} has 
generalized these results to a higher genus base: elliptic surfaces $S\to C$ 
with $q=h^{1,0}(S)$ and $p_g=h^{2,0}(S)$ satisfying
the bounds $4p_g>5(q-1)$, $p_g\geq q+3$ also obey a generic Torelli theorem.
By our Theorem \ref{thm:main2}, generic Torelli is false when $p_g=q=1$.
Hence, the second linear inequality $p_g\geq q+3$ appears to be necessary for
Shepherd-Barron's results to hold.

\subsection*{Ackowledgements} The three authors were partially supported by NSF grants DMS-2201221, DMS-2302548, and DMS-2002183, respectively.

\section{Type II$_b$ degenerations}
\label{sec:period-map}

Let $\pi_0\colon S_0\to C_0$ be an elliptic fibration over an irreducible, nodal, arithmetic genus one curve $C_0$
with smooth fiber over the node, and $\chi_{\rm top}(S_0)=12$.
Such a fibration has a Weierstrass form $\{y^2=4x^3-a_0x-b_0\}$
with $a_0\in H^0(C_0,\mathcal{O}_{C_0}(4P_0))$ and $b_0\in H^0(C_0,\mathcal{O}_{C_0}(6P_0))$ 
for some point $P_0\in (C_0)_{\rm sm}$.
See Figure \ref{typeIIb}.

\begin{figure}
\includegraphics[width=2.5 in]{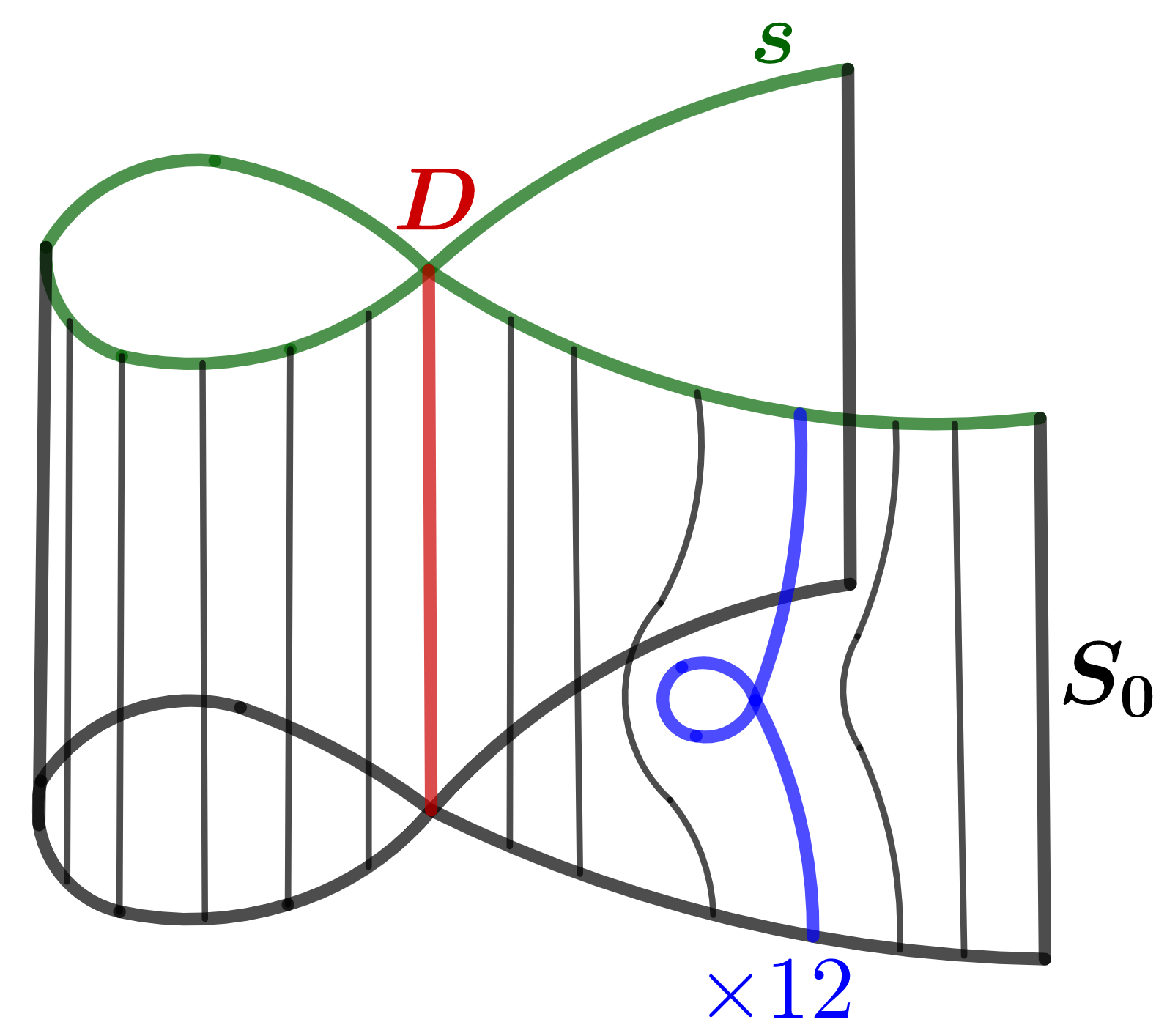}
\caption{A Type II$_b$ surface $S_0$
with double locus $D$ and section $s$.}
\label{typeIIb}
\end{figure}

 Let $C_0\hookrightarrow \cC$ be a smoothing over $(B,0)$ to a genus $1$ curve, with smooth total
space and let $\cP$ be an extension of $P_0$ to a section of $\rho\colon \cC\to (B,0)$.
Then, for any $k>0$, Cohomology and Base Change  \cite[III.12.11]{hartshorne1977algebraic-geometry}
 implies that $\rho_*\cO_{\cC}(k\cP)$
is a rank $k$ vector bundle over $B$. In particular, $a_0$, $b_0$ extend locally to sections
$a$, $b$ of $\rho_*\cO_\cC(4\cP)$, $\rho_*\cO_\cC(6\cP)$ and so we can smooth
the elliptic fibration $S_0\hookrightarrow \cS$ over $(B,0)$. The resulting total space $\cS$ is smooth
with $S_0$ reduced normal crossings. The double locus $D$ is the smooth elliptic curve
fibering over the node of $C_0$.

\begin{definition}\label{type-ii-def} We call such a degeneration $\cS\to \cC\to (B,0)$
a {\it Type II$_b$ degeneration} and we call the central fiber $S_0$ a
{\it Type II$_b$ elliptic surface}. \end{definition}

The subscript $b$ indicates that the base degenerates.
The terminology is motivated by a similar terminology in the classification
of one-parameter degenerations of K3 surfaces due to Kulikov and Persson-Pinkham
\cite{kulikov1977degenerations-of-k3-surfaces, persson1981degeneration-of-surfaces}. They
classify their $K_\cS$-trivial, reduced normal crossing degenerations into Types I, II, III
depending on the depth of the singularity stratification of $S_0$. Here, we instead have
$K_\cS=\cO_\cS(\cF)$ for a relative fiber $\cF\to (B,0)$.

As a reduced normal crossing degeneration, the Picard-Lefschetz transformation $T\colon H^2(S_t,\Z)\to
H^2(S_t,\Z)$ is unipotent and has a logarithm $N:=\log T$.
Furthermore, there is a formula for $N$ which can be deduced
from the Picard-Lefschetz transformation for a nodal degeneration of curves, or from
\cite[Thm.~5.6]{clemens1969picard}.

Let $\gamma_t\subset C_t$ denote the vanishing $1$-cycle of the node of $C_0$.
Since the fiber over the node of $C_0$ is smooth, the restriction of the elliptic fibration
$\pi_t\colon S_t\to C_t$ to the curve $\gamma_t$ is a topologically trivial $2$-torus bundle.
Trivialize it, and let $\alpha, \beta$ be oriented generators of the homology of some fiber.
Define $u:=[\gamma_t\times \alpha]\in H^2(S_t,\Z)$, $v:=[\gamma_t\times \beta]\in H^2(S_t,\Z)$.
Then:

\begin{proposition} $N(x) = (x\cdot u)v-(x\cdot v)u.$ \end{proposition}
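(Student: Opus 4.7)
The plan is to reduce to a local computation near the vanishing cycle and apply the classical Picard-Lefschetz formula for a nodal degeneration of a curve, propagated to the total space via the product structure of the $T^2$-fibration.

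First I would exhibit a geometric representative of the monodromy: the diffeomorphism $\phi\colon S_t \to S_t$ classifying the family $\cS \to (B,0)$ can be chosen, up to isotopy, to be the identity outside an arbitrarily small tubular neighborhood of the vanishing $3$-torus $T := \pi_t^{-1}(\gamma_t) \subset S_t$. Since all fibers of $\pi_t$ over $\gamma_t$ are smooth elliptic curves, the restriction $\pi_t^{-1}(\nu(\gamma_t)) \to \nu(\gamma_t)$ is a topologically trivial $T^2$-bundle, giving a diffeomorphism $\pi_t^{-1}(\nu(\gamma_t)) \cong \nu(\gamma_t) \times T^2_{\alpha,\beta}$. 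Meanwhile, the nodal smoothing $\cC \to (B,0)$ near the node is the standard Picard-Lefschetz model $xy = t$, whose geometric monodromy is a right Dehn twist $\tau$ of the annulus $\nu(\gamma_t)$ about $\gamma_t$. Triviality of the elliptic fibration over $\nu(\gamma_t)$ then forces $\phi|_{\pi_t^{-1}(\nu(\gamma_t))} = \tau \times \mathrm{id}_{T^2}$.

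With this local model in hand, I would compute the induced action on $H^2(S_t, \Z)$. For $x \in H^2(S_t,\Z)$, represent $x$ by a $2$-cycle $\Sigma$ meeting $T$ transversely in a $1$-cycle $c \subset T \cong S^1_{\gamma_t} \times S^1_\alpha \times S^1_\beta$, and write $[c] = a[\gamma_t] + b[\alpha] + m[\beta]$ in $H_1(T, \Z)$. The effect of $\tau \times \mathrm{id}$ is to translate the portion of $\Sigma$ in a tubular neighborhood of $T$ one full turn along $\gamma_t$; the resulting change $\phi_*[\Sigma] - [\Sigma]$ is a $2$-cycle supported near $T$ and expressible as a $\Z$-linear combination of $u = [\gamma_t \times \alpha]$ and $v = [\gamma_t \times \beta]$, the $[\gamma_t]$-component dropping out since $\tau$ fixes $\gamma_t$. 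Identifying the coefficients $b$ and $m$ with the intersection numbers $x \cdot v$ and $-\, x \cdot u$ via Poincar\'e duality inside the trivial normal bundle of $T$ (using $[\alpha]\cdot[\beta] = 1$ in the fiber $T^2$), and taking $N = \log \phi_* = \phi_* - \mathrm{id}$ since $(\phi_* - \mathrm{id})^2 = 0$, yields the desired formula $N(x) = (x \cdot u)v - (x \cdot v)u$.

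The principal technical obstacle will be the sign and Poincar\'e-duality bookkeeping in the last step, pinning down that the coefficients of $u$ and $v$ in $\phi_*[\Sigma] - [\Sigma]$ are exactly $x \cdot u$ and $-(x \cdot v)$ with the correct orientation conventions. A useful consistency check, which I would carry out first, is that $u, v \in \ker N$: both classes may be pushed off $T$ into its normal direction in $S_t$, showing $u \cdot u = u \cdot v = v \cdot v = 0$, and hence $N(u) = N(v) = 0$, in agreement with the proposed formula and with the nilpotence $N^2 = 0$ expected for a Type $\mathrm{II}_b$ degeneration.
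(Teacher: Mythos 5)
Your argument is correct and is essentially a worked-out version of what the paper does: the paper offers no written proof, simply noting that the formula ``can be deduced from the Picard--Lefschetz transformation for a nodal degeneration of curves'' or from Clemens' Theorem~5.6, and your Dehn-twist-times-identity local model over $\nu(\gamma_t)$ is precisely that deduction carried out. The only loose end is the sign bookkeeping you flag, which is genuinely convention-dependent (the paper only specifies $\alpha,\beta$ as oriented generators, and swapping them negates the formula), so pinning the overall sign requires fixing the twist direction and the orientation $[\alpha]\cdot[\beta]=1$ as you indicate.
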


Here $u,v\in \{s,f\}^\perp$ because $s,f$ are classes of line bundles
on the total space $\cS$, and hence monodromy-invariant. So the
classes $u,v$ determine a rank $2$ isotropic lattice
$I:=(\Z u\oplus \Z v)^{\rm sat}\subset I\!I_{2,10}$.

Let $U_I$ be the unipotent subgroup of ${\rm Stab}_\Gamma(I)$ acting trivially on
$I$ and $I^\perp/I$. From the theory of toroidal compactifications
\cite{ash1975smooth-compactifications} (see also
\cite[Sec.~1A]{looijenga2003compactifications-defined2}, 
\cite[Prop.~4.16]{alexeev2021compact} for the case of Type IV domains), the 
unipotent quotient $$\bD/U_I\hookrightarrow A_I$$ embeds as a
punctured disk bundle inside a $\C^*$-bundle $A_I\to I^\perp/I\otimes \cE$.
Here $\cE$ is the universal elliptic curve over $\C\setminus \R$
whose fiber over $\tau \in \C\setminus \R$
is the elliptic curve $\C/\Z\oplus \Z\tau$.
Since $T\in U_I$ the period map $P$ induces a holomorphic period map $B^*\to \bD/U_I$.

We enlarge $A_I\hookrightarrow \oA_I$ to a line bundle and define $(\bD/U_I)^{\rm II}$
as the closure of $\bD/U_I$ in $\oA_I$. This closure is a holomorphic disk bundle over $I^\perp/I\otimes \cE$.
The nilpotent orbit theorem \cite[Thm.~4.9]{schmid1973variation} (the case at hand follows
as in \cite[Thm.~4.2]{friedman1984a-new-proof})
implies that the period map from $B^*$ extends to a holomorphic map
$P\colon (B,0)\to (\bD/U_I)^{\rm II}$ sending $0$ into the boundary divisor $\Delta:=\oA_I\setminus A_I$. As the zero-section of the
line bundle, the boundary divisor is naturally isomorphic to $$\Delta\simeq I^\perp/I\otimes \cE.$$
Note that $I^\perp/I$ is an even, negative-definite, unimodular lattice of rank $8$, which
uniquely determines it to be $I^\perp/I=E_8$.

There is also a direct construction of the period point
$P(0)\in E_8\otimes \cE$ from the singular surface $S_0$ described as follows.
Let $X\to \mathbb{P}^1$ be the rational elliptic surface normalizing $S_0\to C_0$ and 
denote the section and fiber classes again by $s$ and $f$. Then
$\{s,f\}^\perp\subset H^2(X,\Z)$ is isomorphic to $E_8$. Let $X_p$ and
$X_q$ be the two elliptic fibers glued to form the double locus $D$ of $S_0$. A class $\gamma\in \{s,f\}^\perp$ defines a line bundle
$\cL_\gamma\in {\rm Pic}(X)$ and we declare \begin{equation}\label{first-period}
\psi_{S_0}(\gamma):=\cL_\gamma\big{|}_{X_p}
\otimes \cL_\gamma\big{|}_{X_q}^{-1}\in E:={\rm Pic}^0(X_p)\end{equation} where we have 
used the gluing isomorphism $X_p\to X_q$ to form the tensor product
of these two restrictions.

 Then $\psi_{S_0}$ defines a homomorphism
$\psi_{S_0}\in {\rm Hom}(E_8,E)\simeq E_8\otimes E$. Fixing an identification of $\{s,f\}^\perp$ with a fixed copy of the $E_8$ lattice, then deforming $S_0$ in moduli of Type II$_b$ surfaces,
we get a local holomorphic period map $$P^{\rm II}\colon {\rm Def}_{S_0}\to {\rm Hom}(E_8, \cE)$$ which is identical to the extension
of $P$ coming from the nilpotent orbit theorem. The equivalence of these two definitions of
the period map follows from Carlson's description \cite{carlson1985one} of the mixed Hodge
structure on $S_0$; see Section \ref{sec:mhs} and Proposition \ref{period-prop}.
From this description of the boundary period mapping, we see:

\begin{enumerate}
\item To prove that $P$ is dominant, it suffices to show that $P^{\rm II}$ is dominant from
the moduli of Type II$_b$ elliptic surfaces to ${\rm Hom}(E_8,\cE)$.\smallskip
\item On Type II$_b$ surfaces, the period map $P^{\rm II}$ is constructed
by comparing the restriction of a line bundle in
$\{s,f\}^\perp \subset {\rm Pic}(X)$ to the two glued fibers.
\end{enumerate}

Observe that (1) follows from the observation at the beginning
of this section that every Type II$_b$ elliptic surface is smoothable to the interior of $F$, so
the Zariski closure of ${\rm im}(P)\subset (\bD/\Gamma)^{\rm II}$ must contain
${\rm im}(P^{\rm II})$.

\section{Dominance of the period map}

Fix a smooth cubic $D\subset \mathbb{P}^2$ and let $\gamma\in PGL_3(\C)$
be generic. Then $D$ and $\gamma(D)$ generate a pencil of cubics with $9$ distinct
base points. Blowing up at the nine base points $D\cap \gamma(D)=\{p_1,\dots,p_9\}$
of this pencil, we get a rational elliptic surface $X\to \mathbb{P}^1,$ together with an
isomorphism $\gamma\colon D \to \gamma(D)$ between two of its fibers.
The nine blow-ups give rise to nine exceptional sections $F_1,\dots,F_9$
of the resulting elliptic fibration. Let $t\colon D\to D$ be an arbitrary translation and
consider the surface $S_0$ which results from gluing our two fibers of $X\to \mathbb{P}^1$
by the isomorphism $$\gamma \circ t \colon D\to \gamma(D).$$
This construction defines a family of singular surfaces $\cS\to U$
over a Zariski open subset $U\subset PGL_3(\C)\times E$ where $E:={\rm Pic}^0(D)$.

A very general surface over $(\gamma,t)$ does not have a section, as there
are only countably many sections of $X\to \mathbb{P}^1$; for a sufficiently
general translation $t$, none of these will glue to a section of the singular surface.
Still, for all such surfaces, there is a period homomorphism $\psi_{S_0}\colon H^2(X,\Z)\to E$
defined by (\ref{first-period}).
It descends to the rank $9$ quotient $L:=H^2(X,\Z)/\Z f$ because
$f|_D= \mathcal{O}_D$ and $f|_{\gamma(D)}= \mathcal{O}_{\gamma(D)}$. There is a translation
action of $t\in E$ on $U$ given by
$ (\gamma_0,\,t_0)\mapsto(\gamma_0,\,t_0\circ t)=:(\gamma_0',t_0')$.
It acts on the period homomorphism as follows: 
\begin{equation}\label{action}\psi_{S_0'}(v)=\psi_{S_0}(v)+(v\cdot f) t.\end{equation} From this formula, 
we deduce that the dominance
of the period map for Type II$_b$ elliptic surfaces
is equivalent to dominance of the more general period map \begin{align}\label{one}
PGL_3(\C)\times E\dashrightarrow {\rm Hom}(L,E).
\end{align} 

Consider the codimension one subtorus of ${\rm Hom}(L,E)$ for which $\psi_{S_0}(h)=0\in E$,
where $h$ is the pullback of the hyperplane class on $\mathbb{P}^2$. The inverse image of
this subtorus contains, as a component, the locus of $(\gamma,t)$ for which $t=0$,
because under a projective linear identification $\gamma$, we have
$\gamma^*\cO_{\gamma(D)}(1)=\cO_D(1)$. Thus, the dominance of 
(\ref{one}) is implied by the dominance of 
\begin{align}\label{two} PGL_3(\C)\dashrightarrow {\rm Hom}(H^2(X,\Z)/\Z f+\Z h,E).\end{align}
This follows because the action of $t\in E$ 
on ${\rm Hom}(L,E)$ described by (\ref{action}) is translation by an elliptic subcurve
transverse to the codimension $1$ subtorus of ${\rm Hom}(L,E)$ appearing
on the right-hand side of (\ref{two}).

Finally, $\Z^9 \simeq \textrm{span}\{F_i\,\big{|}\,i=1,\dots,9\}=h^\perp$
surjects onto $H^2(X,\Z)/\Z f+\Z h$.
Pulling back the period map to this lattice, we get a map
\begin{align} \begin{aligned}\label{three} PGL_3(\C)&\dashrightarrow {\rm Hom}(\Z^9,E)/\mathfrak S_9 \\ 
\gamma&\mapsto \{\psi_{S_0}(F_1),\dots,\psi_{S_0}(F_9)\}.\end{aligned} \end{align}
Here, the base points $D\cap \gamma(D)$, and hence the exceptional curves $F_i$,
are not canonically ordered; they are permuted by the monodromy of the universal family.
This is why we must quotient the target by the symmetric group
$\mathfrak S_9$. Since $\sum_{i=1}^9 [F_i] = 3h-f$ in $H^2(X,\Z)$, the image of the period map (\ref{three}) lands in $$
\{(e_1,\dots,e_9)\in E^9\,\big{|}\,e_1+\cdots +e_9=0\}/\mathfrak S_9=A_8\otimes E/W(A_8)\simeq \mathbb{P}^8.$$
The last isomorphism follows from a well-known theorem of Looijenga
\cite{looijenga1976root}. Applying the definition of $\psi_{S_0}$
gives a very explicit construction of (\ref{three}):

\begin{definition} Fix a smooth cubic $D\subset \mathbb{P}^2$. Define
$E:={\rm Pic}^0(D)$ and let $A\colon {\rm Sym}^9E\to E$ denote the addition map. For a generic $\gamma\in PGL_3(\C)$, set $D\cap \gamma(D)=\{p_i\}_{i=1}^9$ and $q_i:=\gamma^{-1}(p_i)\in D$. We define \begin{align}\label{four}
\begin{aligned} \Psi\colon PGL_3(\C) & \dashrightarrow A^{-1}(0)\simeq \mathbb{P}^8 \\
\gamma &\mapsto  \{\cO_D(p_i- q_i)\}_{i=1}^9.\end{aligned}\end{align}
 \end{definition}

\begin{theorem}\label{surj} The rational map $\Psi$ from (\ref{four}) is dominant. 
Thus, the period mapping for Type II$_b$ surfaces is dominant. \end{theorem}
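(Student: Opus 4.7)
Both $PGL_3(\C)$ and $A^{-1}(0) \simeq \mathbb{P}^8$ are irreducible of dimension $8$. Hence $\Psi$ is dominant if and only if it is generically finite, and by upper semi-continuity of fiber dimension it suffices to exhibit a single $\gamma_0 \in PGL_3(\C)$ at which the differential $d\Psi|_{\gamma_0}$ is an isomorphism. My plan is to reduce dominance to this injectivity via an explicit residue calculation for $d\Psi$.

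For $\xi \in \mathfrak{sl}_3 \cong T_{\gamma_0}PGL_3(\C)$, the flow $\gamma_t = \gamma_0 \exp(t\xi)$ induces a first-order deformation of the cubic $\gamma_0(D) = \{F_0 = 0\} \subset \mathbb{P}^2$ by a polynomial $\delta F_\xi \in H^0(\mathbb{P}^2, \cO(3))/\C\cdot F_0$. At each of the $9$ intersection points $p_i \in D \cap \gamma_0(D)$---all transverse for generic $\gamma_0$---the implicit function theorem produces a variation $\delta p_i \in T_{p_i}D$ satisfying
\[
dF_0|_{p_i}(\delta p_i) = -\delta F_\xi(p_i).
\]
Applying the analogous computation to the cubic $\gamma_0^{-1}(D)$, the preimage $q_i = \gamma_0^{-1}(p_i)$ moves along $D$ by a vector $\delta q_i \in T_{q_i}D$. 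Trivializing $TD$ by a non-vanishing holomorphic differential on the elliptic curve $D$, the differential reads
\[
d\Psi|_{\gamma_0}(\xi) = \bigl(\delta p_i - \delta q_i\bigr)_{i=1}^9 \in \bigl\{(v_1,\dots,v_9) \in \C^9 : \textstyle\sum_{i=1}^9 v_i = 0\bigr\} \cong T_{\Psi(\gamma_0)}\mathbb{P}^8.
\]

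A tangent vector $\xi$ in the kernel satisfies $\delta p_i = \delta q_i$ for all $i$, giving $8$ linear conditions on the $8$-dimensional vector space $\mathfrak{sl}_3$ (the $9$th is automatic because both $\sum p_i$ and $\sum q_i$ represent the class of $\cO_D(3)$, so $\sum(\delta p_i - \delta q_i) = 0$ in $T_0 E$ identically). I expect the main obstacle to be the verification that these $8$ conditions are linearly independent for a suitable $\gamma_0$. My strategy is to specialize $\gamma_0$ so that the configuration $\{p_i, q_i\}$ admits an explicit description---for example, choosing $\gamma_0$ so that several $p_i$ coincide with flexes of $D$, or so that the pencil spanned by $D$ and $\gamma_0(D)$ contains a reducible cubic---and then to check non-degeneracy of the resulting linear system by hand. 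An alternative, which may prove cleaner if the direct approach becomes unwieldy, is to degenerate $\gamma_0$ so that $\gamma_0(D)$ acquires a node, analyze the limit of $\Psi$ on the boundary of $PGL_3(\C)$, and invoke upper semi-continuity of fiber dimension to deduce generic finiteness.
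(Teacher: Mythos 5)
Your reduction is sound as far as it goes: since $PGL_3(\C)$ and $A^{-1}(0)\simeq\mathbb{P}^8$ are both irreducible of dimension $8$, surjectivity of $d\Psi$ at a single point where $\Psi$ is defined gives dominance, and your implicit-function-theorem formula $d\Psi|_{\gamma_0}(\xi)=(\delta p_i-\delta q_i)_i$ (valid where $D$ and $\gamma_0(D)$ meet transversely and the nine values are distinct) is the correct infinitesimal description. This is in fact the authors' original strategy: a remark in the paper states that their first proof checked non-degeneracy of $d\Psi$ at an explicit $(D,\gamma)$ by computer. But your write-up stops exactly at the step that constitutes the proof. "Check non-degeneracy of the resulting linear system by hand" for a suitable $\gamma_0$ is the entire difficulty; no candidate $\gamma_0$ is produced and no independence of the $8$ linear conditions is verified, so as it stands this is a plan rather than a proof. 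Worse, the specializations you float to make the computation tractable (several $p_i$ at flexes, a reducible member of the pencil, or $\gamma_0(D)$ nodal) tend to destroy the transversality and distinctness hypotheses under which your formula for $d\Psi$ was derived, so at those points you would need a separate argument, not the same linear algebra.

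For comparison, the published proof sidesteps the differential entirely. It takes $D$ a generic member of the extremal pencil $\lambda(x^2y+y^2z+z^2x)+\mu(xyz)=0$ and $\gamma={\rm diag}(1,\zeta_3,\zeta_3^2)$, so that $D\cap\gamma(D)=\{3p_1,3p_2,3p_3\}$ consists of three fixed points of $\gamma$ and $\Psi(\gamma)=\{0,\dots,0\}$; it then shows by a geometric argument (counting fixed points of a small deformation $\gamma'$, using the tangent lines at the $p_i'$, the group law to force $9$-torsion, and discreteness of the automorphism group of the associated rational elliptic surface) that $\gamma$ is an isolated point of its fiber $\Psi^{-1}(\{0,\dots,0\})$. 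Upper semicontinuity of fiber dimension then gives generic finiteness, hence dominance. Note that this argument deliberately works at a non-transverse configuration, precisely the regime where your $d\Psi$ formula does not apply, and replaces the linear-algebra check by a finite-order/rigidity argument. Your "alternative" suggestion of degenerating $\gamma_0$ and using semicontinuity is pointing in this direction, but to make it a proof you must actually carry out such an isolation argument (or else exhibit a transverse $\gamma_0$ and verify the $8\times 8$ non-degeneracy explicitly, e.g.\ by computation).
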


\begin{proof} Let $G\subset PGL_3(\C)$ be the finite subgroup for which $g(D)=D$.
We claim that $\Psi$ extends, as a morphism, from $U$ to $PGL_3(\C)\setminus G$.
This is easy: the map $\Psi$ extends continuously because $D\cap \gamma(D)$ is still a finite
set for all $\gamma\in PGL_3(\C)\setminus G$. Normality of
$PGL_3(\C)\setminus G$ implies that a continuous extension is algebraic.

\begin{figure}
\includegraphics[width=2.6in]{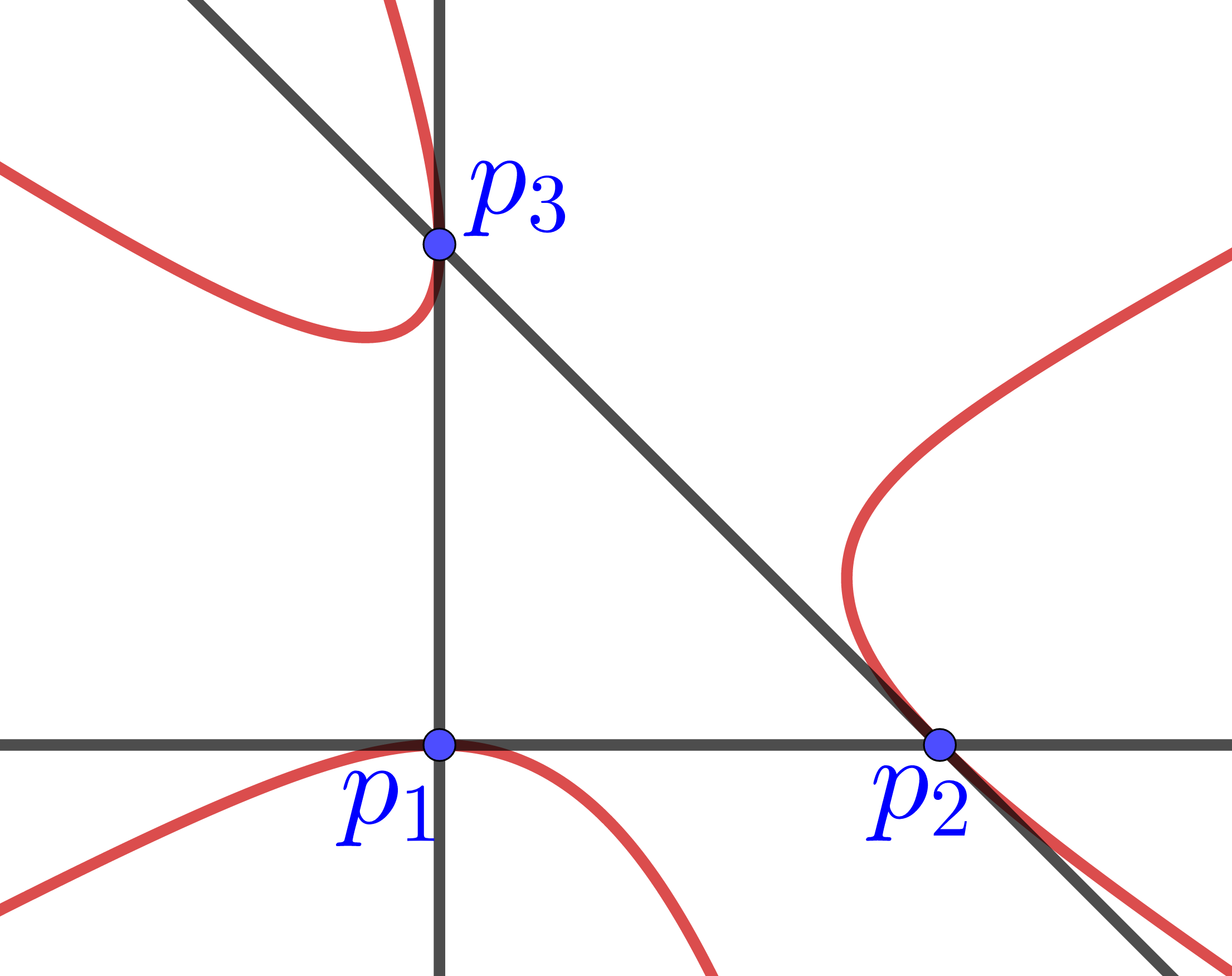}
\caption{The pencil generated by two cubics, shown in red and black,
with set-theoretic base locus three blue points.}
\label{cubics}
\end{figure} 

We choose $D$ and $\gamma$ carefully so that the set $D\cap \gamma(D)$
has only three elements. Concretely, consider the extremal
cubic pencil $X_{9111}\to \mathbb{P}^1_{[\lambda:\mu]}$ in the notation of
\cite{miranda1986on-extremal-rational},
given by the equation $$\lambda(x^2y+y^2z+z^2x)+\mu(xyz)=0,$$
see Figure \ref{cubics}.
Let $D:=D_{[\lambda:\mu]}$ be a generic fiber, and let $\gamma = {\rm diag}(1,\zeta_3,\zeta_3^2)$
where $\zeta_3$ is a primitive third root of unity. Then $\gamma(D)=D_{[\zeta_3\lambda:\mu]}$
and so $D$ and $\gamma(D)$ generate the pencil. The intersection multiset $D\cap \gamma(D)$
 is $\{3p_1,3p_2,3 p_3\}$ where $$p_1=[1:0:0],\,p_2=[0:1:0],\,
p_3=[0:0:1].$$ Since this $\gamma\in PGL_3(\C)$ fixes $p_1$, $p_2$, $p_3$,
the period $\Psi(\gamma)=\{0,\dots,0\}\in {\rm Sym}^9E$ vanishes.
To prove that $\Psi$ is dominant, it suffices to show that there is no small deformation
$\gamma'\in PGL_3(\C)$ of $\gamma$ for which $\Psi(\gamma')=\{0,\dots,0\}$.

Suppose, to the contrary, that there were. Since $\Psi(\gamma')=\{0,\dots,0\}$,
every base point in $D\cap \gamma'(D)$ is fixed by $\gamma'$. If $|D\cap \gamma'(D)|\geq 4$,
then $\gamma'$ must fix a line in $\mathbb{P}^2$. This is impossible for a small deformation
of $\gamma$, which has isolated fixed points. Conversely, $|D\cap \gamma'(D)|\geq 3$
because each of $p_1$, $p_2$, $p_3$ deforms to some fixed point of $\gamma'$.
Hence, $\gamma'$ fixes exactly three points $p_1'$, $p_2'$, $p_3'$. Furthermore,
$D\cap \gamma'(D)=\{3p_1',3p_2',3p_3'\}$ as a multiset, again because $\gamma'$ is near $\gamma$, and the map
$$PGL_3(\C)\setminus G \to \mathrm{Sym}^9(D)$$
sending $\gamma' \mapsto D\cap \gamma'(D)$ with multiplicities is continuous.

Since
${\mult}_{p_i'}(D\cap \gamma'(D))\geq 2$ we deduce that $\gamma'$ preserves
the tangent direction $T_{p_i'}D$ and the corresponding tangent line $L_i'$. Thus,
$\gamma'\in PGL_3(\C)$ fixes the point $L_i'\cap L_j'\in \mathbb{P}^2$. But, as we noted
before, $\gamma'$ only fixes three points (this holds not just on $D$ but
in the ambient plane $\mathbb{P}^2$). Using that $\gamma'$ is a small deformation
of $\gamma$, we deduce that $$L_1'\cap L_2'=p_2',\,L_2'\cap L_3'=p_3',\,L_3'\cap L_1'=p_1'.$$
 
 Write $p_i'=p_i+t_i$ for a translation $t_i$. By the addition law on a cubic,
 we have $$2p_1'=-p_2',\,2p_2'=-p_3',\,2p_3'=-p_1'$$ from which we can conclude
 that $t_1=(-2)^3t_1$ i.e. $t_1$ is $9$-torsion. But since $t_i$ are small, we conclude
 that $t_1=t_2=t_3=0$ and so $p_i'=p_i$.
 
 Thus, $\gamma'$ fixes $(p_1,p_2,p_3)$, implying that $\gamma'\in (\C^*)^2\subset PGL_3(
 \C)$ lies in the maximal torus associated to the coordinates $[x:y:z]$. Furthermore, $\gamma'$
 preserves the base locus scheme $D\cap \gamma'(D)$, as this is the unique subscheme of $D$
 which has length $3$ at each of $p_1,p_2,p_3$. So $\gamma'$ induces an automorphism of
 the pencil generated by $D$ and $\gamma'(D)$. Since the automorphism group of a rational
 elliptic surface is discrete, and $\gamma'$ is a small deformation of $\gamma$, 
 the automorphism $\gamma'$ must have order $3$. But no nontrivial
 small deformation of $\gamma={\rm diag}(1,\zeta_3,\zeta_3^2)$
 {\it within the torus} $(\C^*)^2$ has order $3$. This is a contradiction.
\end{proof}

\begin{remark} Our original proof of Theorem \ref{surj} checked by computer
that $d\Psi$ was non-degenerate for an explicitly chosen $D$ and $\gamma$.
\end{remark}

\begin{proof}[Proof of Theorem \ref{thm:main}]
By the discussion at the end of Section \ref{sec:period-map}, $P$ is dominant
if $P^{\rm II}$ is. The latter follows from Theorem \ref{surj}.\end{proof}

\section{Type II$_f$ degenerations}\label{sec:second-degen}

We consider in this section degenerations of $S\to C$ that keep the base $C$ constant. These are never of Type II$_b$ because in all such degenerations, $j(C)\to \infty$.

Take a one-parameter deformation of $a,b\in H^0(C,\cO_C(4p)), H^0(C,\cO_C(6p))$ over $(B,0)$ 
until the discriminant $4a_0^3+27b_0^2=0\in H^0(C,\cO_C(12p))$ vanishes identically.
For instance, we can take the fiber over $0\in B$
to be $$y^2=x^3-3r^2x+2r^3$$ with $r\in H^0(C,\cO_C(2p))$. The degeneration
$$\overline{\cS}\to C\times B\to (B,0)$$
of elliptic surfaces has a central fiber $\oS_0\to C$ whose generic fiber
is irreducible nodal, with two cuspidal fibers over the zeroes of $r$. In particular,
the normalization $\oS^\nu_0:=X\to C$ is the smooth $\mathbb{P}^1$-bundle
$X=\mathbb{P}_C(\cO\oplus L)$ and $\oS_0$ is reconstructed
from gluing a bisection $D$ of $X\to C$, branched over the two zeroes of $r$. This bisection $D$
is glued along the involution switching the two sheets of $\nu \colon D\to C$.

For future reference, note that $\NS(X)\simeq H^2(X,\Z)$ is spanned by the $\mathbb P^1$-fiber class $f$ and the class of the section $s_{\infty} = \mathbb P_C(\cO\oplus 0)$, with intersection form
$$f\cdot f = 0,\,\,\,\, s_\infty\cdot f = 1, \,\,\,\, s_\infty\cdot s_{\infty} = -1,$$
and $K_X = -f-2s_\infty$. The other natural section $s_0 = \mathbb P_C(0\oplus L)$ has class $f+s_\infty$.

The bisection $D\subset X$ has genus 2, being a double cover of $C$ branched over two points.
Thus its cohomology class is $[D]=2f+2s_\infty = -K_X+f = 2s_0$. Note that $[D]^2=4$ and $[D]\cdot K_X = -2$. The section $s$ that is present on the smooth surfaces in the family $\mathcal S$ limits to $s_\infty$, which is the unique section of $X$ disjoint from $D$.

\begin{proposition}\label{limits-ok} Generically, two singular fibers limit to each
cuspidal fiber of $\oS_0$. The limits of the remaining eight singular fibers
lie over a degree $8$ divisor in $C$. The only restriction on this divisor
is that it is linearly equivalent to $8p$. \end{proposition}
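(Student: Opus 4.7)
The approach is to analyze the discriminant of the deformed Weierstrass equation and read off the limits of its zero divisor on $C$. Write the deformation as $a_t = -3r^2 + t\alpha + O(t^2)$ and $b_t = 2r^3 + t\beta + O(t^2)$ with $\alpha \in H^0(C,\cO_C(4p))$ and $\beta \in H^0(C,\cO_C(6p))$ parametrizing the infinitesimal smoothing direction, so that
\[
\Delta_t = 4 a_t^3 + 27 b_t^2 = 108\, t\, r^3(r\alpha+\beta) + t^2\bigl(27\beta^2 - 36 r^2\alpha^2\bigr) + 4 t^3 \alpha^3.
\]
This lies in $H^0(C,\cO_C(12p))$, so for each $t\neq 0$ small its $12$ zeros on $C$ give the positions of the singular fibers of $\cS_t$.

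For the first statement, I would do a local analysis near each cusp $q_i$ (a simple zero of $r$). Choose a coordinate $u$ near $q_i$ with $r = cu + O(u^2)$; for generic $\beta$ with $\beta(q_i)\neq 0$, the leading behavior is
\[
\Delta_t \approx 108 c^3 \beta(q_i)\, u^3 t + 27 \beta(q_i)^2 t^2,
\]
whose zero locus in the $(u,t)$-plane clusters near $u=0$. A careful analysis of this local picture --- in particular, separating the several naive roots $u \sim t^{1/3}$ into those that truly contribute to the degenerating fiber versus those that peel off --- is what yields the count of two fibers per cusp.

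For the remaining eight singular fibers, the divisor $D_8$ they form has class $12p - 4p = 8p$, obtained by subtracting the cusp contribution $2[q_1] + 2[q_2] \sim 4p$ from the total class $12p$; hence $D_8 \in |8p|$. To show every divisor in $|8p|$ is realized, I would study the map $(\alpha,\beta) \mapsto D_8$ from the $10$-dimensional deformation space $H^0(C,\cO_C(4p))\oplus H^0(C,\cO_C(6p))$ into the $7$-dimensional linear system $|8p|$, verifying surjectivity either by a differential computation at a generic base point or by explicit construction of $(\alpha,\beta)$ realizing any prescribed target divisor in $|8p|$.

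The main obstacle is the local multiplicity analysis at each cusp. The polynomial equation $\Delta_t(u)=0$ naively has three solutions $u \sim t^{1/3}$ clustering at each $q_i$, so extracting the correct geometric count of two --- and reallocating the remainder into the $|8p|$ divisor of the other eight fibers --- requires a finer interpretation. One natural route is to analyze the Kodaira-type structure of the degenerating fiber at $q_i$ (consistent with a cuspidal type II contribution of Euler number two), or to pass to a suitable semistable reduction of the family where the vanishing cycles at each cusp separate correctly from the ambient smoothing.
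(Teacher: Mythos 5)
There is a genuine gap, and it is exactly at the point you flag as the ``main obstacle'': for your \emph{generic} first-order deformation the count of two fibers per cusp is simply false, and no finer local analysis will rescue it. With $a_t=-3r^2+t\alpha$, $b_t=2r^3+t\beta$ and $r\alpha+\beta\not\equiv 0$, your own formula shows $\Delta_t=108\,t\,r^3(r\alpha+\beta)+O(t^2)$, so for small $t\neq 0$ the divisor ${\rm div}(\Delta_t)$ converges in ${\rm Sym}^{12}C$ to $3\,{\rm div}(r)+{\rm div}(r\alpha+\beta)$: \emph{three} singular fibers limit to each cusp and the remaining \emph{six} lie over a degree $6$ divisor in $|6p|$. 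Locally the equation $u^3t+c\,t^2=0$ has three honest roots $u\sim t^{1/3}$, each of which is a zero of the discriminant of the smooth surface $S_t$, i.e.\ a genuine singular fiber; none of them ``peels off,'' and semistable reduction cannot change where on $C$ the singular fibers of $S_t$ sit. Consequently your surjectivity step is also aimed at the wrong map: in your setup there is no degree $8$ divisor $D_8$ to map to, only ${\rm div}(r\alpha+\beta)\in|6p|$.

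The missing idea is to \emph{constrain the first-order direction} so that the order-$t$ term of the discriminant vanishes identically, i.e.\ take $(\delta a,\delta b)$ with $r\,\delta a+\delta b=0$, and then go to second order. This is what the paper does: with
$$y^2=x^3-(3r^2+\epsilon g_4)x+(2r^3+\epsilon g_4 r+\epsilon^2 g_6),\qquad g_d\in H^0(C,\cO_C(dp)),$$
one gets $\Delta=9r^2(12rg_6-g_4^2)\epsilon^2+O(\epsilon^3)$, so the limit of the singular fibers is $2\,{\rm div}(r)+{\rm div}(12rg_6-g_4^2)$, which is the configuration claimed in the proposition (two per cusp, plus eight points). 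After that one still has to prove that $(g_4,g_6)\mapsto{\rm div}(12rg_6-g_4^2)$ hits every divisor in $|8p|$; the paper does this by a projective-geometry argument, realizing the sections $12rg_6-g_4^2$ as the join of the linear space $\{rg_6\}\simeq\mathbb{P}^5\subset\mathbb{P}^7=\mathbb{P}H^0(C,\cO_C(8p))$ with the (projected) Veronese image of $\{g_4^2\}$, and checking this join is everything. Your proposed ``differential computation or explicit construction'' could in principle replace that last step, but only after you have identified the correct second-order family and the correct target map; as written, the proposal proves a different (and for the paper's purposes insufficient) limiting configuration.
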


\begin{proof} Consider a deformation of the Weierstrass equation 
$$y^2=x^3-(3r^2+\epsilon g_4)x+(2r^3+\epsilon g_4r+\epsilon^2g_6)$$ where
$g_d\in H^0(C,\cO_C(dp))$ has degree $d$. The discriminant $\Delta = 4a^3+27b^2$
is $$\Delta = 9r^2(12rg_6 - g_4^2)\epsilon^2+\cO(\epsilon^3).$$
Thus, the Zariski closure of the discriminant divisor is
$$\lim_{\epsilon\to 0} {\rm div}(\Delta) =
2\cdot {\rm div}(r)+{\rm div}(12rg_6-g_4^2).$$ For fixed $r$, the
sections $rg_6$ form a linear subspace
$\mathbb{P}^5\subset \mathbb{P}^7=\mathbb{P}H^0(C,\cO(8p))$ 
of codimension $2$. The sections $g_4^2\in \mathbb{P}H^0(C,\cO(8p))$
are the image of the degree $2$ Veronese embedding, followed by a linear
projection $$v_2\colon  \mathbb{P}^3\hookrightarrow \mathbb{P}^9=
\mathbb{P}{\rm Sym}^2H^0(C,\cO(4p))\dashrightarrow \mathbb{P}^7.$$
The inverse image of $\{{\rm div}(rg_6)\}=\mathbb{P}^5\subset \mathbb{P}^7$ is a copy of
$\mathbb{P}^7\subset \mathbb{P}^9$ under the linear projection.
Thus, the vanishing loci of linear combinations are represented geometrically as the join
of the projective subvarieties $v_2(\mathbb{P}^3)$, $\mathbb{P}^7\subset \mathbb{P}^9$.
This join is all of $\mathbb{P}^9$. Thus, we can realize any divisor in
$|8p|$ as $\lim_{\epsilon\to 0}{\rm div}(\Delta)-2\cdot {\rm div}(r)$. 
\end{proof}

For general $g_4$ and $g_6$, the punctured family over $B\setminus 0$
has smooth total space. The threefold $\overline{\cS}$ is a double cover
branched over the vanishing locus of the cubic
$x^3-(3r^2+\epsilon g_4)x+(2r^3+\epsilon g_4r+\epsilon^2g_6)$,
so it can only be singular where two of the roots
of the cubic coincide. This shows that the singular locus
$\overline{\cS}_{\rm sing}\subset V(y,x-r,\epsilon)$ is contained
in the singularities of the fibers of $\oS_0\to C$.

Since $\epsilon^2\mid\mid\Delta$, the local equation of the double
cover is generically $y^2=u^2+\epsilon^2$ along the nodes of $\oS_0\to C$.
So the nodes form a family of $A_1$-singularities in $\overline{\cS}$. At the nodes on the fibers
lying over ${\rm div}(12rg_6-g_4^2)$, the local equation is rather $y^2=u^2+v\epsilon^2$.
Thus, to find a semistable model $\cS\to (B,0)$,
we simply blow up the double locus of $\oS_0$ in the total space $\overline{\cS}$.

The resulting central fiber is $S_0=X\cup_DV$
for a ruled surface $V\to C$, which
contains $D$ as a bisection and has $8$ reducible fibers over the points
in ${\rm div}(12rg_6-g_4^2)$,
see Figure \ref{typeIIf}. Thus
$V\sim Bl_{p_1,\dots,p_8}X$ is deformation-equivalent
to the blow-up of $X$ at $8$ points on $D$, with the double locus
on $V$ identified with $D$ via the strict transform. It is only 
deformation-equivalent because $V\to C$ could be the projectivization
of a non-split extension of $L$ by $\cO$. Regardless, we can identify
$$H^2(V,\Z)=H^2(X,\Z)\oplus_{i=1}^8\Z E_i$$
and $[D] = 2s_0 - [E_1]-\cdots-[E_8]=-K_V+f$. 

\begin{figure}
\includegraphics[width=5in]{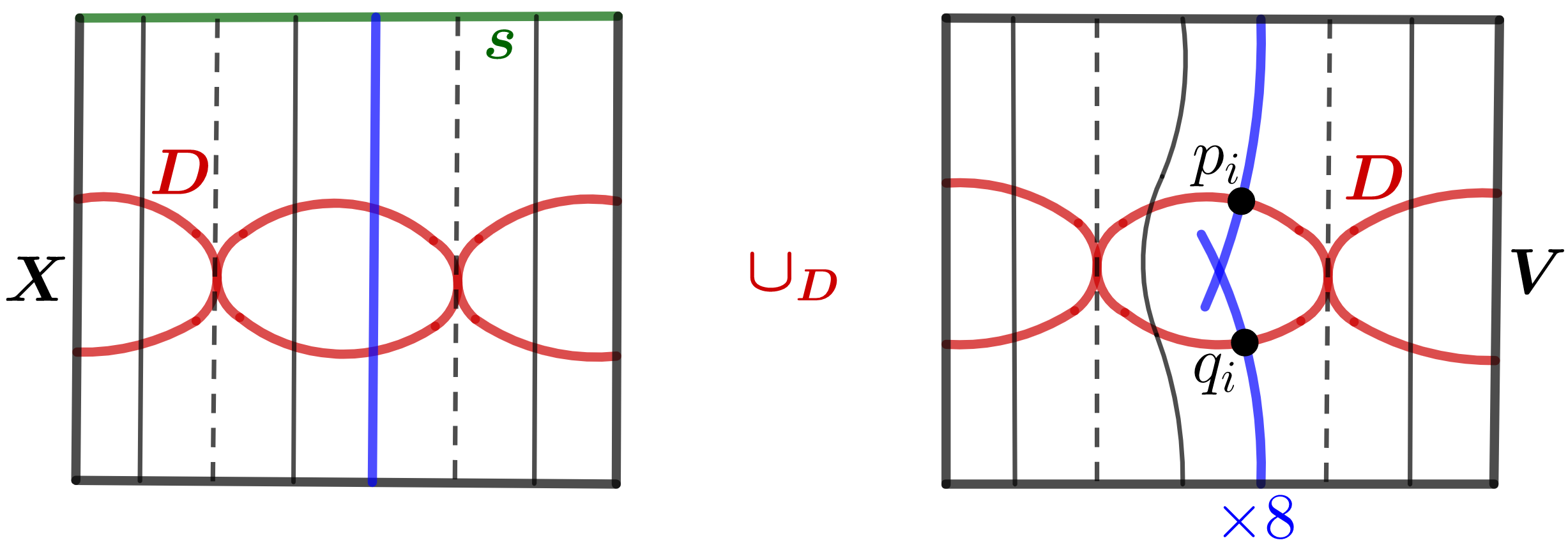}
\caption{A Type II$_f$ surface $S_0 = X\cup_D V$ with the genus $2$ double locus $D$ shown in red, the section $s$ in green, limits of $8$ nodal fibers in blue, and limits of pairs of nodal fibers dashed.}
\label{typeIIf}
\end{figure}

\begin{definition} We call the degeneration $\cS\to C\times B\to (B,0)$
a {\it Type II$_f$ degeneration} and we call the central fiber $S_0$
a {\it Type II$_f$ elliptic surface}. \end{definition}
%
%The limit of the holomorphic $2$-forms $\Omega_{S_t}$ as $t\to 0$
%corresponds to a meromorphic two-form $\Omega_X$ on $X$
%with a simple pole along $D$. There is, then, a residual one-form 
%$\Omega_D:={\rm res}_D(\Omega_X)$ which has two zeroes,
%on the fiber associated to the Hodge bundle. Furthermore, the sum of
%the residues of $\Omega_X$ under the gluing of the double locus
%must be zero (this property defines the dualizing sheaf of $\oS_0$).
%As such, $\Omega_D$ must be in the $(-1)$-eigenspace of the involution
%$\iota\colon D\to D$ switching the sheets of the map to $C$.
%
%Since $\cS$ is a reduced normal crossings degeneration with
%smooth double locus, the index of nilpotency of the logarithm
%of the Picard-Lefschetz transform $N\colon H^2(S_t,\Z)\to H^2(S_t,\Z)$
%is at most $1$. But we can now see that $N\neq 0$, because
%$\Omega_D$ is nonzero. Hence,
%the Hodge structures on $S_t$ do in fact degenerate,
%and the period map $P$ necessarily limits to the boundary divisor
%$\Delta\subset (\bD/\Gamma)^{\rm II}$.

From Section \ref{sec:mhs} and Proposition \ref{period-prop}, the mixed Hodge structure
of a Type II$_f$ surface has a period map to $E_8\otimes \cE$ which
can be described as follows. Consider the sublattice $\{K_V,f\}^\perp\subset H^2(V,\Z)$. This is isometric to the root lattice
$$D_8 = \{(a_1,\dots,a_8)\in \Z^8 \,\big{|}\, a_1+\cdots+a_8\in 2\Z \}$$
via the map $(a_1,\dots,a_8) \mapsto \sum_{i=1}^8 a_i[E_i]  - \left(\frac{1}{2} \sum_{i=1}^8 a_i\right) f$. When this isometry is understood, we will refer to $\{K_V,f\}^\perp$ simply as $D_8$.

 Let $E:={\rm Pic}^0(D)/{\rm Pic}^0(C)$ be
the Prym variety of the double cover $\nu\colon D\to C$.
We define a period homomorphism \begin{align} 
\label{iif_period} 
\begin{aligned} \psi_{S_0}\colon D_8& \to E 
\\
\gamma & \mapsto \mathcal{L}_\gamma\big{|}_D\textrm{ mod }{\rm Pic}^0(C)
\end{aligned}
\end{align}
by lifting an element $\gamma\in D_8$ to an element $\cL_\gamma\in {\rm Pic}(V)$.
These lifts form a ${\rm Pic}^0(C)$-torsor and thus the image of
$\cL_\gamma\big{|}_D\in {\rm Pic}^0(D)$ under the map to $E$ is well-defined.

\begin{remark} The period point $\psi_{S_0}\in{\rm Hom}(D_8,E)$ determines,
up to a finite isogeny, the period point in $E_8\otimes E$.
The extensions of an element of ${\rm Hom}(D_8, E)$ 
to an element of ${\rm Hom}(E_8, E)$ are a torsor over
${\rm Hom}(E_8/D_8,E)=E[2]$. \end{remark}

\begin{proof}[Proof of Theorem \ref{thm:main2}]
To show $\deg P>1$, it suffices to prove that the moduli of Type II$_f$
surfaces (appearing as limits of elliptic surfaces in $F$)
also dominate the boundary divisor $\Delta$.
This follows from Theorem \ref{surj2} below. \end{proof}

\begin{theorem}\label{surj2} The period mapping for Type II$_f$ surfaces is dominant.\end{theorem}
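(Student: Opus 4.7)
Plan: The period map $\Phi\colon \mathcal{M}_{II_f} \dashrightarrow \Delta$ for Type II$_f$ surfaces is a rational map between equi-dimensional varieties (both of dimension $9$), so dominance is equivalent to $d\Phi$ having full rank at some point. The dimension counts are: source $= 1 + 1 + 7 = 9$ (for $j(C)$, for $r \in \mathbb{P}|2p|$, and for $(q_i) \in D^8$ subject to $\sum \nu(q_i) \sim 8p$, modulo $\mathfrak{S}_8$); target $\Delta \simeq E_8 \otimes \mathcal{E}$ has dimension $1 + 8 = 9$.

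First I would make the period map explicit. Take the basis $\beta_i = E_i - E_{i+1}$ of $D_8$ for $i = 1, \dots, 7$ together with $\beta_8 = E_7 + E_8 - f$. Since $E_i|_D = q_i$ and $f|_D = \mathcal{O}_D(\nu^* p)$, the formula $\psi_{S_0}(\gamma) = \mathcal{L}_\gamma|_D \bmod \nu^*\mathrm{Pic}^0(C)$ yields
\[ \psi_{S_0}(\beta_i) = [q_i - q_{i+1}] \in E \quad (i \le 7), \qquad \psi_{S_0}(\beta_8) = [q_7 + q_8 - \nu^* p] \in E. \]
Thus, up to the finite $E[2]$-ambiguity noted in the remark, the period map becomes
\[ \Phi\colon (C, r, q_1, \dots, q_8) \longmapsto \bigl(E,\; A(q_1), \dots, A(q_8)\bigr), \]
where $E = \mathrm{Pic}^0(D)/\nu^*\mathrm{Pic}^0(C)$ is the $1$-dimensional Prym variety and $A\colon D \to E$ is the Abel--Prym map, realizing $D$ as a $2$-to-$1$ cover of $E$.

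Next I would compute $d\Phi$ at a suitable point. Decompose the source tangent space as $T_{(C,r)} \oplus T_{(q_i)}$ with dimensions $2$ and $7$ respectively. Variations of $(q_i)$ (subject to the constraint) move $(A(q_i))$ in a $7$-dimensional subspace of $T(E^8)$ via the tangent map of Abel--Prym --- concretely, the hyperplane $\sum (n_i / a_i) w_i = 0$ where $n_i, a_i \ne 0$ are the derivatives of $\nu, A$ at $q_i$. Variations of $(C, r)$ must then contribute the remaining two target directions: the modulus $\tau_E$ (which the Prym map $(C, r) \mapsto \tau_E$ supplies with nonzero derivative, being dominant from the $2$-dimensional moduli of double covers onto the $1$-dimensional $\mathcal{M}_{1,1}$) together with one further direction in $T(E^8)$ transverse to the Abel--Prym hyperplane above.

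The main obstacle will be this transversality: under an infinitesimal deformation of $(C, r)$ that preserves $\tau_E$ (a $1$-dimensional iso-Prym locus), the abstract points $q_i$ remain marked on a deforming double cover $D$, and their Abel--Prym images in the fixed $E$ trace a curve in $E^8$; one must show this curve exits the Abel--Prym hyperplane. Following the strategy of Theorem~\ref{surj}, I would attempt this at an explicit symmetric configuration --- for example, $C$ with $j(C) = 0$ and an order-$3$ automorphism, branch points of $\nu$ related by this automorphism, and the $q_i$'s grouped into symmetric triples --- at which the mixed Hodge structure of Section~\ref{sec:mhs} and the corresponding Carlson extension class are explicitly computable. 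Transversality then reduces to nonsingularity of a $9\times 9$ Jacobian, equivalently, to showing that no nontrivial iso-Prym deformation of $(C, r)$ can be absorbed by a simultaneous reparametrization of the $q_i$. A failure of transversality would force the configuration to satisfy a strong Hodge-theoretic degeneracy, which the chosen symmetric point is designed to avoid.
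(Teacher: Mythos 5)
Your setup is fine --- the dimension counts (source and target both $9$-dimensional, with the constraint $\sum \nu(q_i)\in|8p|$ correctly built in, matching the locus $Z$ cut out by relation (\ref{relation}) in the paper) and the explicit formulas for $\psi_{S_0}$ on the $D_8$-basis are consistent with Proposition \ref{period-prop} and (\ref{iif_period}). But the proposal stops exactly where the theorem actually lives. Reducing dominance to nonsingularity of a $9\times 9$ Jacobian is only a reformulation; you never exhibit a point at which that Jacobian is nonsingular, nor carry out the computation at your proposed symmetric configuration. The final claim, that a failure of transversality ``would force a strong Hodge-theoretic degeneracy which the chosen symmetric point is designed to avoid,'' is not an argument: the whole content of the theorem is that no such degeneracy holds identically, and you have given no mechanism ruling it out. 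In particular the one genuinely delicate direction --- showing that an iso-Prym deformation of $(C,r_9+r_{10})$ moves the tuple $(A(q_1),\dots,A(q_8))$ out of the $7$-dimensional Abel--Prym hyperplane --- is precisely the step you defer, so as written there is a genuine gap. (The authors themselves remark, for Theorem \ref{surj}, that a direct check of nondegeneracy of the differential required a computer; your plan would face the same issue here without an explicit verification.)

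For comparison, the paper avoids any differential computation by a fiber-dimension argument on the equidimensional map $\Psi\big|_Z$: fix the target elliptic curve $E$ and examine the fiber over the zero period point $\{0,\dots,0\}\in \mathrm{Sym}^8E$. Since the Abel--Prym morphism $D\to E$, $p\mapsto \mathcal{O}_D(p-\iota(p))\ \mathrm{mod}\ \mathrm{Pic}^0(C)$, is surjective, vanishing of all eight periods confines the $r_i$ to a finite subset of $C$, and a natural component of the fiber is the locus $M_E\to L_E$ where $r_1=\cdots=r_8=r\in\{r_9,r_{10}\}$, with $L_E$ the iso-Prym curve. Restricting the constraint $r_1+\cdots+r_8-4(r_9+r_{10})=0$ to this component forces $r_9-r_{10}$ to be $4$-torsion in $\mathrm{Pic}^0(C)$, a finite nonempty condition on $L_E$; hence the fiber has a $0$-dimensional component and the map is dominant. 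If you want to salvage your route, you would need to either perform the explicit $9\times 9$ rank computation at a concrete $(C,r,q_1,\dots,q_8)$, or replace it by a degeneration argument of this kind that produces a $0$-dimensional fiber component without differentiating.
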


\begin{proof} The period point $\psi_{S_0}$
and limit mixed Hodge structure of $\cS$ are encoded,
up to a finite map, in the data $(\nu \colon D\to C,\{r_i\}_{i=1}^8)$
consisting of
\begin{enumerate}
\item a degree $2$ map $\nu\colon D\to C$ from a genus $2$ to a genus $1$ curve, and
\item a multiset of $8$ points $\{r_1,\dots,r_8\}\subset C$.
\end{enumerate}
Let $\iota\colon D\to D$ be the involution switching
the sheets of $\nu$ and let $\{p_i,q_i\}=\nu^{-1}(r_i)$. Then
$\cO_D(p_i-q_i)\in {\rm Pic}^0(D)$  gives, upon quotienting by 
${\rm Pic}^0(C)$, the period $$\psi_{S_0}(F_i-F_i')=[\cO_D(p_i-q_i)]\in E,$$ 
where $F_i+F_i'$  is a reducible fiber of the ruling $V\to C$. Ranging over the 
eight reducible fibers, the tuple \[(\cO_D(p_i-q_i)\textrm{ mod }{\rm Pic}^0(C))_{i=1}^8\in E^8\] encodes $\psi_{S_0}$ up to torsion, because $\bigoplus_{i=1}^8\Z(F_i-F_i')\subset D_8$ has
finite index.

Let $\{r_9, r_{10}\}\in C$ be
the branch points of $\nu$. Then $\nu$ is determined
by the monodromy representation $\rho\colon \pi_1(C\setminus \{r_9,r_{10}\},*)\to \Z_2$. Let ${\rm Prym}^2\cC$ be the moduli space of
Prym data $(C,\{r_9,r_{10}\},\rho)$ over the universal genus $1$ curve 
$\mathcal{C}\to \mathcal{M}_1$.
It is a Deligne-Mumford stack of dimension $2$,
one dimension for $j(C)$ and another for the element
$r_9-r_{10}\in {\rm Pic}^0(C)$,
well-defined up to sign. The data of $\rho$ is finite.

A point $r_i\in C$ determines $p_i$ up to switching
$p_i\leftrightarrow q_i$ which acts by negation on the image of $\cO_D(p_i-q_i)$ in $E$.
Thus, we globally get a well-defined map 
\begin{align}\begin{aligned} \Psi\colon {\rm Sym}^8\cC \times_{\mathcal M_1} 
{\rm Prym}^2\cC &\to \Z^8 \otimes \cE/\mathfrak S_8^{\pm} \\ 
(C, \{r_1,\dots,r_8\} ,\{r_9,r_{10}\},\rho)& 
\mapsto \{\cO_D(p_i-q_i)\textrm{ mod }{\rm Pic}^0(C)\}_{i=1}^8\end{aligned}\end{align}
where $\cE$ is the universal elliptic curve.
Since the image of each $\cO_D(p_i-q_i)$ in $E$ is only
well-defined up to sign, and the reducible fibers of $V\to C$ are unordered,
we must quotient the target by the signed permutation
group $\mathfrak S_8^{\pm}$.

Observe that ${\rm Sym}^8\cC \times_{\mathcal M_1} {\rm Prym}^2\cC$ is ten-dimensional.
There is a single condition ensuring that a point in the domain of $\Psi$ arises from a degeneration of surfaces 
in $F$: If $L\to C$ is the Hodge bundle, then $r_9+r_{10}\in |2L|$
and so by Proposition \ref{limits-ok}, $\{r_1,\dots,r_8\},\{r_9,r_{10}\}$ can arise so long as
$r_1+\cdots+r_8\in |8L|$ i.e.~the relation \begin{equation}\label{relation} r_1+\cdots+r_8- 4(r_9+r_{10})=0\in {\rm Pic}^0(C)
\end{equation} is satisfied.
So the Type II$_f$ limits of degenerations from $F$ are described by 
$$Z= \{\textrm{elements of }{\rm Sym}^8\cC \times_{\mathcal M_1} {\rm Prym}^2\cC\,\big{|}\,r_1+\cdots+r_8-4(r_9+r_{10})=0\}.$$
Our goal is to prove the dominance of the map 
$\Psi\big{|}_Z\colon Z\to \Z^8 \otimes \cE/\mathfrak S_8^{\pm}.$

Fix an elliptic curve fiber $E$ of $\mathcal{E}$,
consider the point $\{0,\dots,0\}\in {\rm Sym}^8E$, and
let $\ker_E(\Psi):= \Psi^{-1}(\{0,\dots,0\})$.
It suffices to prove that $Z\cap \ker_E(\Psi)$ contains,
as a component, some zero-dimensional scheme. Let
$L_E\subset {\rm Prym}^2\mathcal{C}$ be the sublocus
of Prym data whose Prym variety is $E$. It is a curve inside
the surface ${\rm Prym}^2\cC$. Then, ${\rm ker}_E(\Psi)$ contains,
as a component, an unramified double cover $M_E\to L_E$ on which
$r=r_1=\cdots = r_8$ and $r\in \{r_9,r_{10}\}$ because the morphism $D\to E$ sending $p\mapsto \cO_D(p-\iota(p))\textrm{ mod }{\rm Pic}^0(C)$ is surjective.

The defining equation (\ref{relation}) of $Z$ restricts
to $M_E$ to give the equation $$4(r_9-r_{10})=0\in {\rm Pic}^0(C),$$
i.e.~ $r_9-r_{10}\in {\rm Pic}^0(C)[4].$ The locus in $L_E$
on which $r_9-r_{10}$ is $4$-torsion is finite and 
non-empty. So the theorem follows.
\end{proof}

The proofs of Theorems \ref{surj} and \ref{surj2} 
suggest a rather wild conjecture:

\begin{conjecture}\label{fmconj} $F_{1,1}$ admits a period-preserving birational involution $S\leftrightarrow S'$
for which $j(C)=j(F')$ and $j(F)=j(C')$. Here $C, C'$ are the bases and $F, F'$ are the canonical fibers. 
Furthermore, $S$ and $S'$ are moduli spaces of stable vector bundles on each other of rank $2$,
determinant $\cO(s)$, and $c_2={\rm pt}$. A Fourier-Mukai transform induces an isomorphism
of their integral Hodge structures. \end{conjecture}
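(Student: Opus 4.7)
The plan is to construct the birational involution explicitly as a Fourier--Mukai transform. Given $S\in F_{1,1}$ with a suitable polarization, define $S':=M_S(2,\cO(s),[\mathrm{pt}])$ to be the coarse moduli space of Gieseker-stable rank~$2$ vector bundles on $S$ with the specified invariants. A first sanity check: the expected dimension of such a fixed-determinant moduli space is $4c_2-c_1^2-3\chi(\cO_S)=4-(-1)-3=2$, so $S'$ is a surface. Smoothness at a stable point follows since $\mathrm{Ext}^2_0(\mathcal{E},\mathcal{E})\cong \mathrm{Hom}_0(\mathcal{E},\mathcal{E}\otimes K_S)^\vee$ vanishes generically, using that $K_S=\pi^*L$ together with stability.

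To see that $S'\in F_{1,1}$ realizes the claimed swap of $j$-invariants, I would analyze the restriction of a stable bundle $\mathcal{E}$ to each of the two natural elliptic curves of $S$. Restricting $\mathcal{E}$ to a generic fiber $F$ of $\pi\colon S\to C$ gives a rank~$2$, degree~$1$ bundle on $F$, which by Atiyah's classification has the form $\mathcal{A}_F\otimes L$ for a unique $L\in\mathrm{Pic}^0(F)\cong F$. This parametrization should produce an elliptic fibration $\pi'\colon S'\to C'$ and allow identification of its generic fiber $F'$ with the original base $C$, hence $j(F')=j(C)$; symmetrically, restricting $\mathcal{E}$ to the section curve $s\cong C$ should exhibit $C'$ as isomorphic to $F$, giving $j(C')=j(F)$. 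The swap of base and canonical fiber thus emerges as a formal consequence of the symmetric roles of $s$ and $f$ when viewed through the moduli problem.

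For the involution, the universal family $\mathcal{U}$ on $S\times S'$ serves as a kernel for a Fourier--Mukai functor $\Phi_{\mathcal{U}}\colon D^b(S)\to D^b(S')$. Showing that $\Phi_{\mathcal{U}}$ is an equivalence would simultaneously give $M_{S'}(2,\cO(s'),[\mathrm{pt}])\cong S$ and an integral Hodge isomorphism $H^*(S,\Z)\cong H^*(S',\Z)$ preserving the Mukai pairing; restricted to $\{s,f\}^\perp\subset H^2$, this yields the period-preserving property claimed in the conjecture.

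The principal obstacle is proving that $S'$ is in fact a smooth projective surface lying in $F_{1,1}$ with the predicted swapped elliptic structure, and that $\Phi_{\mathcal{U}}$ is a genuine derived equivalence. Known constructions of Fourier--Mukai partners for elliptic surfaces with section, such as those of Bridgeland--Maciocia and Friedman, preserve the base curve $C$ and act by twisting within the fibers; the conjectured involution is more exotic, genuinely interchanging the two elliptic directions, and has no clear precedent in the literature. A promising first test would be to verify the conjecture on the Type~II$_b$ boundary of Section~\ref{sec:period-map}, where $S_0$ is obtained by gluing two smooth fibers of a rational elliptic surface and the interchange $C\leftrightarrow F$ should admit an explicit description via the gluing data, before attempting to extend the involution to the interior of $F_{1,1}$ by deformation.
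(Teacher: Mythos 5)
This statement is left as an open conjecture in the paper --- the authors offer no proof, only the heuristic motivation that such an involution would explain why the Type II$_b$ and Type II$_f$ boundary strata dominate the same divisor $\Delta$. Your proposal is therefore not being measured against an existing argument, and on its own terms it does not close the conjecture: it is a program whose central step is exactly the point you defer with ``should emerge as a formal consequence.'' Concretely, the fiber-restriction analysis you invoke does not produce the claimed swap. For $\mathcal{E}$ of rank $2$ with $\det\mathcal{E}=\cO(s)$, the restriction to a smooth fiber $F$ has odd degree $s\cdot f=1$, and Atiyah's classification says the stable rank-$2$ bundles of degree $1$ on $F$ are parametrized by their determinant, i.e.\ by $\mathrm{Pic}^1(F)\simeq F$. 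Sheafifying this over $C$, the relative moduli space is again fibered over the \emph{same} base $C$ with fibers isomorphic to the original fibers --- this is the standard Bridgeland--Maciocia/Friedman picture you cite, and it is base-preserving. Nothing in your restriction argument produces a new base curve $C'$ with $j(C')=j(F)$, nor identifies the new fibers with $C$; the step where you restrict to the section $s\simeq C$ and ``exhibit $C'$ as isomorphic to $F$'' has no mechanism behind it, since the section is not a fiber of any fibration on $S$ and a rank-$2$ degree-$(-1)$ bundle on $s$ carries no evident elliptic-curve's worth of moduli matching $F$. This is precisely why the conjectured involution is ``exotic'': the absolute moduli space $M_S(2,\cO(s),\mathrm{pt})$ must differ from the relative (fiberwise) one in an essential way for the swap to occur, and establishing that is the open problem, not a formal consequence of the numerics.

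Several auxiliary claims also need genuine arguments rather than expectations: nonemptiness, irreducibility, projectivity and smoothness of $M_S(2,\cO(s),\mathrm{pt})$ (your $\mathrm{Ext}^2_0$ vanishing is plausible but must contend with $K_S=\pi^*L$ being effective, so Serre duality does not give vanishing for free); membership of $S'$ in $F_{1,1}$, which requires $p_g(S')=q(S')=1$, relative minimality, a distinguished section, and $\chi_{\rm top}=12$; existence of a universal sheaf (here the coprimality of rank $2$ and fiber degree $1$ helps, but only after the fibration structure on $S'$ is in hand); and the deduction of period-preservation from a derived equivalence, which requires tracking the Mukai-lattice isometry back to an isometry of $\{s,f\}^\perp$ compatible with the polarizing classes. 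Your closing suggestion --- to test the construction first on the Type II$_b$ boundary, where $S_0$ is glued from two fibers of a rational elliptic surface and the conjectural partner should be a Type II$_f$ surface --- is a sensible and genuinely useful way to probe the conjecture, and is in the spirit of the degeneration techniques of the paper; but as written the proposal records a strategy and its obstacles rather than a proof.
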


The existence of such a birational involution would give a geometric explanation for why
degenerations of Type II$_b$ and II$_f$ can have
the same periods, even though $j(C)\to \infty$ in the former, while
$j(F)\to \infty$ in the latter.

\section{A family losing dimension}

Let $F^{\rm cusp}\hookrightarrow F$ be the closure of the sublocus of elliptic fibrations
$S\to C$ which have six cuspidal (Kodaira type II) fibers. These fibrations are isotrivial and
have a Weierstrass form $y^2=x^3+b$ for some $b\in H^0(C,\cO_C(6p))$. There is a fiber preserving
automorphism $\sigma\colon S\to S$, given by $$\sigma\colon (x,y)\mapsto (\zeta_3x,-y)$$
and $\sigma^*\Omega_S=\zeta_6\Omega_S$ acts nontrivially on the holomorphic $2$-form by a
primitive sixth root of unity.
Furthermore, since $\sigma$ preserves $s$ and $f$, it defines an element $\sigma^*\in \Gamma=O(I\!I_{2,10})$ which is easily checked to fix only the origin of $I\!I_{2,10}$.
So $\sigma^*$ endows $I\!I_{2,10}$ with the structure of a Hermitian lattice of hyperbolic signature
$(1,5)$ over the Eisenstein integers $\Z[\zeta_6]$ and 
$$\bB:=\mathbb{P}\{x\in I\!I_{2,10}\otimes \C\,\big{|}\,x\cdot \overline{x}>0,\, \sigma^*x=\zeta_6x\}\subset \bD$$
is a Type I Hermitian symmetric subdomain (a complex ball), of dimension $5$.
Letting $\Gamma_0:=\{\gamma\in \Gamma\,\big{|}\,\gamma\circ \sigma^* = \sigma^*\circ \gamma\}$
be the group of Hermitian isometries, we get a period map to a
 $5$-dimensional ball quotient $$F^{\rm cusp}\to \bB/\Gamma_0.$$
 But $\dim F^{\rm cusp}=1+5=6$ with parameters
 corresponding to $j(C)$ and the relative locations of the six cuspidal fibers.
 Thus $P\big{|}_{F^{\rm cusp}}$ has positive fiber dimension.
 
 It seems likely
 that $P\big{|}_{F^{\rm cusp}}$ is surjective, with generic fiber dimension $1$. Regardless, this gives a second example, after Ikeda's \cite{ikeda2019bielliptic}, 
proving that $P$ is {\it not} a finite map,
even though it is generically finite by Theorem \ref{thm:main}:

\begin{corollary} $P$ is not finite.\end{corollary}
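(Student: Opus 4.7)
The plan is to extract the corollary directly from the preceding discussion via a dimension count, with a small amount of care about what ``not finite'' requires.

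First, I would verify the dimension of $F^{\rm cusp}$: a member is determined by $j(C)$ (one parameter) together with $b\in H^0(C,\cO_C(6p))$, which contributes $6$ parameters, modulo the $\C^*$ rescaling $b\mapsto \lambda^6 b$, giving $\dim F^{\rm cusp}=1+6-1=6$. Next, I would note that by $\sigma^*$-equivariance the holomorphic $2$-form $\Omega_S$ on any $S\in F^{\rm cusp}$ satisfies $\sigma^*\Omega_S=\zeta_6\Omega_S$, so the line $H^{2,0}(S)$ is a $\zeta_6$-eigenline for $\sigma^*$ inside $\{s,f\}^\perp\otimes \C$. Hence the period map factors through the Hermitian ball quotient $\bB/\Gamma_0$, of complex dimension $5$.

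The restriction $P|_{F^{\rm cusp}}\colon F^{\rm cusp}\to \bB/\Gamma_0$ is then a holomorphic map from a $6$-dimensional variety to a $5$-dimensional one. Elementary dimension theory forces every non-empty fiber to have dimension at least $1$. Pick any $S\in F^{\rm cusp}$; then the fiber of $P$ through $S$ contains the component of $P|_{F^{\rm cusp}}^{-1}(P(S))$ through $S$, which is positive dimensional. Since $P$ admits a fiber that is not a finite set of points, $P$ cannot be a finite morphism.

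The main subtlety to check, which I would spell out, is just that the inclusion $F^{\rm cusp}\hookrightarrow F$ is a closed immersion of positive dimension so that a positive-dimensional fiber of $P|_{F^{\rm cusp}}$ is also a positive-dimensional fiber of $P$, and that the ball quotient $\bB/\Gamma_0$ really is $5$-dimensional (which follows from $\sigma^*$ having eigenvalue $\zeta_6$ of multiplicity $6$ on $I\!I_{2,10}\otimes \C$, whose intersection with the signature $(2,10)$ Hermitian form has signature $(1,5)$, as noted in the paragraph). No further input is required; the corollary is immediate once the dimension count is in place.
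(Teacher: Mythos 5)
Your argument is correct and is essentially the paper's own: the paper likewise observes that $\dim F^{\rm cusp}=6$ while $P|_{F^{\rm cusp}}$ factors through the $5$-dimensional ball quotient $\bB/\Gamma_0$, so the restricted period map has positive fiber dimension and hence $P$ is not finite. Your extra checks (the closed immersion $F^{\rm cusp}\hookrightarrow F$ and the signature count giving $\dim\bB=5$) are sensible bookkeeping but do not change the route.
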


\section{Mixed Hodge Structures}\label{sec:mhs}

\subsection*{MHS of a normal crossings surface}\label{subsec:surf}

Let $S_0$ be a reduced normal crossings surface with smooth double locus and no triple points.
Our goal in this section is to explicitly describe the mixed Hodge structure on $H^2(S_0)$.
Let $S_0 = \bigcup_{i=1}^m S_i$ with the double curve $D_{ij} = S_i\cap S_j$
a smooth, possibly disconnected or empty curve for all $i<j$. Let $D:=\bigcup_{i<j} D_{ij}$.
The Mayer-Vietoris sequence associated to a covering of $S_0$ by neighborhoods of the irreducible components $S_i$ reads:
\begin{equation}\label{mayerv} \bigoplus_{i=1}^m H^1(S_i) \overset{\iota^*}\to 
\bigoplus_{i< j} H^1(D_{ij}) \to H^2(S_0) \to  \bigoplus_{i=1}^m H^2(S_i) 
\overset{{\rm res}}\longrightarrow \bigoplus_{i< j} H^2(D_{ij}).\end{equation}
Here $\iota^*$ and ${\rm res}$ are signed restriction maps.
Let $K\subset \bigoplus H^2(S_i)$ be the kernel of the morphism ${\rm res}$, that is
$K=\{(\alpha_i\in H^2(S_i))\,\big{|}\,\alpha_i\cdot D_{ij} = \alpha_j\cdot D_{ij}\}$.
%Observe that $K$ is the orthogonal complement of an isotropic sublattice
%inside the unimodular lattice $\bigoplus H^2(S_i)$.
Define
$$J:= \coker(\iota^*).$$
By exactness of the sequence (\ref{mayerv}), we obtain a short exact sequence
$$0 \to J \to H^2(S_0) \to K \to 0.$$ In fact, it is a short exact sequence
of mixed Hodge structures with left hand term $J$ pure of weight $1$, and the right 
hand term $K$ pure of weight $2$.

\begin{proposition}\label{hodge-ext}
If $p_g(S_i)=0$ for all components $S_i\subset S_0$ (equivalently,
$K$ is Hodge-Tate of weight $2$)
then the Carlson classifying map \cite{carlson1985one}
$$\phi:K \to {\rm Jac}(J)$$ of the extension
coincides with the Abel-Jacobi map. More precisely, an element of $K$ is a tuple $(\alpha_i\in H^2(S_i,\Z))$
represented by line bundles $\mathcal{L}_i$ such that for each $i<j$ we have
$c_1(\mathcal{L}_i|_{D_{ij}}) -c_1(\mathcal{L}_j|_{D_{ij}}) =0\in H^2(D_{ij})$. Then $\phi = \pi\circ {\rm AJ}
\circ \psi $ where
\begin{align*} 
(\alpha_i\in H^2(S_i,\Z))&\overset{\psi}\mapsto \textstyle \bigoplus_{i<j}
\mathcal{L}_i|_{D_{ij}} \otimes \mathcal{L}_j|_{D_{ij}}^{-1}\in {\rm Pic}^0(D), \end{align*}
${\rm AJ}\colon {\rm Pic}^0(D) \to  {\rm Jac}(D)=  {\rm Jac}(H^1(D))$ is the classical Abel-Jacobi isomorphism, and
$\pi \colon {\rm Jac}(D) \to {\rm Jac}(J)$ is the projection map.
\end{proposition}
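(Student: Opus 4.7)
My plan is to apply Carlson's recipe for computing the classifying map of a mixed Hodge structure extension: for the extension $0 \to J \to H^2(S_0) \to K \to 0$, with $K$ Hodge--Tate of type $(1,1)$ and $J$ pure of weight $1$, the class $\phi(k)\in{\rm Jac}(J)$ is represented by $\tilde k_{\Z} - \tilde k_F$ modulo $F^1 J_{\C} + J_{\Z}$, for any integral lift $\tilde k_{\Z} \in H^2(S_0,\Z)$ and any Hodge lift $\tilde k_F \in F^1 H^2(S_0,\C)$ of $k$. I would construct both lifts directly from the line-bundle data $(\cL_i)$ used to define $\psi$, and then verify that their difference equals the Abel--Jacobi image of $\psi(k)$ in $\bigoplus {\rm Jac}(D_{ij}) = {\rm Jac}(D)$.

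To produce explicit representatives I would model $H^\bullet(S_0)$ by the total complex of the double complex attached to the semisimplicial resolution $S^\bullet$ with $S^0 = \bigsqcup_i S_i$ and $S^1 = \bigsqcup_{i<j} D_{ij}$, so a class is a pair $(u_i;\, v_{ij})$, with $u_i$ a 2-cochain on $S_i$ and $v_{ij}$ a 1-cochain on $D_{ij}$, closed under the total differential which incorporates the restriction $u_i|_{D_{ij}} - u_j|_{D_{ij}}$. This is Deligne's mixed Hodge complex, and it directly recovers the Mayer--Vietoris sequence defining $J$ and $K$. Taking \v{C}ech $\Z$-cocycles $c_i$ for $\alpha_i$, the hypothesis $\alpha_i|_{D_{ij}} = \alpha_j|_{D_{ij}}$ yields integer primitives $\beta_{ij}$ with $d\beta_{ij} = c_i|_{D_{ij}} - c_j|_{D_{ij}}$, and $(c_i; -\beta_{ij})$ is then an integral lift. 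For the Hodge lift, I equip each $\cL_i$ with a Hermitian metric, so $\alpha_i$ is represented by the Chern form $\omega_i \in A^{1,1}(S_i) \subset F^1 A^2(S_i)$ (of pure type $(1,1)$ because $H^{2,0}(S_i)=0$); since $\omega_i|_{D_{ij}} - \omega_j|_{D_{ij}}$ is an exact $(1,1)$-form on the curve $D_{ij}$, the $\partial\bar\partial$-lemma furnishes a primitive $\eta_{ij} \in A^{1,0}(D_{ij}) = F^1 A^1(D_{ij})$, and $(\omega_i; -\eta_{ij})$ represents a Hodge lift.

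Subtracting yields $(c_i - \omega_i;\, \eta_{ij} - \beta_{ij})$; the first component is exact on each $S_i$, so after modifying by a total-complex coboundary the representative reduces to $(0;\, \eta_{ij} - \beta_{ij})$, supported on $D$ and defining a class in $\bigoplus H^1(D_{ij},\C)$ that projects onto $J_{\C}$. The final step is to recognize $\eta_{ij} - \beta_{ij}$, modulo $F^1$ and integers, as the Abel--Jacobi image of the holomorphic line bundle $\cL_i|_{D_{ij}} \otimes \cL_j|_{D_{ij}}^{-1}$ in ${\rm Pic}^0(D_{ij}) \cong H^1(D_{ij},\C)/(F^1 + H^1(D_{ij},\Z))$: this is the standard Deligne-cohomological description of the Abel--Jacobi isomorphism, since $\omega_i|_{D_{ij}} - \omega_j|_{D_{ij}}$ is the Chern form of this line bundle and $\beta_{ij}$, $\eta_{ij}$ are respectively an integer and an $F^1$ primitive of its de Rham class. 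Projecting by $\pi\colon {\rm Jac}(D) \to {\rm Jac}(J)$ then gives $\phi = \pi\circ{\rm AJ}\circ\psi$. The main obstacle is this final identification --- verifying, with the correct sign and $2\pi i$ normalizations, that the \v{C}ech--de Rham expression $\eta_{ij} - \beta_{ij}$ is the Abel--Jacobi representative of a degree-zero line bundle under Carlson's convention --- but it is a purely local computation on each smooth curve $D_{ij}$, after which everything else is bookkeeping in the Mayer--Vietoris total complex.
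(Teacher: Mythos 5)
Your overall strategy---Carlson's class as (integral lift) $-$ ($F^1$ lift), computed in the Mayer--Vietoris total complex, with the final identification made via the Deligne-cohomology description of the Abel--Jacobi map---is a legitimate alternative to the paper's argument, which instead builds an explicit integral retraction out of topological $2$-cycles (capping the $1$-cycles $\gamma_{ij}$ by chains $\Gamma_i$), an $F^1$ section out of Chern forms and $\bar\partial$-primitives $\theta_{ij}\in\mathcal{A}^{1,0}$, and then evaluates the resulting period integrals, using the bump-form Lemma \ref{ddbarlog} along paths $\ell_{ij}$ to land directly on $\tau\mapsto\int_{\ell_{ij}}\tau$. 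However, as written your reduction has a genuine gap at exactly the step you defer to the end. After subtracting the coboundary of $(\mu_i;0)$, where $d\mu_i=c_i-\omega_i$ on $S_i$, the representative is not $(0;\,\eta_{ij}-\beta_{ij})$ but $(0;\,\eta_{ij}-\beta_{ij}\pm(\mu_i-\mu_j)|_{D_{ij}})$, and the correction cannot be dropped: $\eta_{ij}-\beta_{ij}$ is not even closed, since $d(\eta_{ij}-\beta_{ij})=(\omega_i-\omega_j)|_{D_{ij}}-(c_i-c_j)|_{D_{ij}}$ is exact but nonzero, so it defines no class in $H^1(D_{ij},\C)$. For the same reason the ``standard Deligne-cohomological description'' you invoke is not well-posed from an arbitrary pair of primitives $(\beta_{ij},\eta_{ij})$ alone: the class depends on the choice of $1$-cochain linking the integral cocycle $(c_i-c_j)|_{D_{ij}}$ to the Chern form, and changing that choice by a closed form moves the putative class by an arbitrary element of $H^1(D_{ij},\C)$, not by something in $F^1+H^1(D_{ij},\Z)$. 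So the crux is not a purely local computation on each curve; it is the compatibility between the curve-level primitive and the surface-level cochains $\mu_i$ entering the global reduction.

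The gap is repairable, and saying how would complete your argument: since the global class in $J_\C$ is independent of the choice of $\mu_i$ up to $\iota^*\bigl(\bigoplus_i H^1(S_i)\bigr)$, you may take $\mu_i$ to be the canonical \v{C}ech--de Rham comparison cochain attached to $(\mathcal{L}_i,h_i)$ (built from $\tfrac{1}{2\pi i}\log$ of the transition functions and the local potentials of the metric). With that choice, $(\mu_i-\mu_j)|_{D_{ij}}$ is precisely the intrinsic Deligne primitive for $\mathcal{L}_i|_{D_{ij}}\otimes\mathcal{L}_j|_{D_{ij}}^{-1}$ with its quotient metric, the corrected cochain $\eta_{ij}-\beta_{ij}\pm(\mu_i-\mu_j)|_{D_{ij}}$ is closed, and its class mod $F^1+\Z$ is the Abel--Jacobi image of that degree-zero bundle; projecting to ${\rm Jac}(J)$ then gives $\phi=\pi\circ{\rm AJ}\circ\psi$ up to the sign convention relating your difference-of-lifts formula to Carlson's retraction-composed-with-section. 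This is, in effect, the content the paper supplies by hand through its choices of supported representatives and Lemma \ref{ddbarlog}; in your setup it must be stated and proved rather than cited as standard. (Minor point: the Chern form of a Hermitian metric is of type $(1,1)$ for any surface; the hypothesis $p_g(S_i)=0$ is used to make $K$ Hodge--Tate so that the extension is classified by ${\rm Jac}(J)$, not to force the type of $\omega_i$.)
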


%\begin{remark}
%A higher dimensional version of this lemma would be something like: Suppose $X_0 = \cup X_i$ has complex dimension $2n$. Then on the middle cohomology,
%$$\bigoplus_{i=1}^m H^{2n-1}(X_i) \overset{\iota^*}\to \bigoplus_{i< j} H^{2n-1}(D_{ij}) \to H^{2n}(X_0) \to  \bigoplus_{i=1}^m H^{2n}(X_i) \overset{{\rm res}}\longrightarrow \bigoplus_{i< j} H^{2n}(D_{ij}).$$
%is exact. Assume that $H^{2n}(X_i)$ is Hodge-Tate. Then we have maps
%$$K \to \bigoplus_{i=1}^m CH^n(X_i) \to \bigoplus_{i<j} CH^{n}(D_{ij})_{hom}.$$
%There is a generalized Abel-Jacobi map $CH^n(D)_{hom} \to \Jac^n(D)$, where
%$$\Jac^n(D) = H^{2n-1}(D,\C)^* / H_{2n-1}(D,\Z)+F^n.$$
%The ambiguity in choosing the lifts from $K$ to Chow is killed by projecting to $H^{2n-1}(D)^-$, so we must compose with the map $\Jac^n(D) \to \Jac^n(D)^-$, as before.
%\end{remark}

\begin{proof}
Following Carlson's construction, the classifying map $\phi$ for a weight separated extension 
of mixed Hodge structures is given by the composition of two splittings. First, choose a left-splitting 
$a: H^2(S_0) \to J$ over $\Z$. Next, choose a right-splitting $b:K \to F^1 H^2(S_0)_\C$ over $\C$, 
which respects the Hodge filtration. The composition $a_\C\circ b: K \to J_\C$ gives the classifying map 
after passing to the Jacobian quotient:
$$\phi: K \to J_\C / (J_\Z + F^1 J_\C).$$
For $a$, it suffices to produce a morphism on homology $\ker(\iota_*) \to H_2(S_0)$, and then use the
universal coefficient theorem to give a map in the opposite direction:
$$H^2(S_0) \to H_2(S_0)^* \to \ker(i_*)^* \simeq \coker(\iota^*) = J.$$

To define the morphism $\ker(\iota_*) \to H_2(S_0)$, choose a basis for $\ker(\iota_*)$ at the singular 
chain level: tuples of 1-cycles $t_k=(\gamma^k_{ij}\in \mathcal{Z}_1(D_{ij}))$ such that for each $i$,
$$\sum_j \iota_*(\gamma^k_{ij}) = \partial(\Gamma^{k}_i) \textrm{ for some  }\Gamma_i^k\in \mathcal{C}_2(S_i).$$
We use the convention that $\gamma_{ij} = - \gamma_{ji}$. Choosing such $\Gamma_i^k$ for each
$t_k$ in the basis of $\ker(i_*)$, we construct a $2$-cycle, see Figure \ref{capping}, 
$$T_k=\bigcup_i \Gamma_i^{k} \in \mathcal{Z}_2(S_0).$$
We take the $1$-cycles $\gamma_{ij}^k$ to be $\Z$-linear combinations of some fixed
$2g(D_{ij})$ loops on each $D_{ij}$, whose union we call $\gamma$, chosen so that their
complement in $D_{ij}$ is a contractible $4g$-gon.
The assignment $t_k \mapsto [T_k]\in H_2(S_0)$ then induces a splitting 
$$a: H^2(S_0) \to J.$$

\begin{figure}
\includegraphics[width=4in]{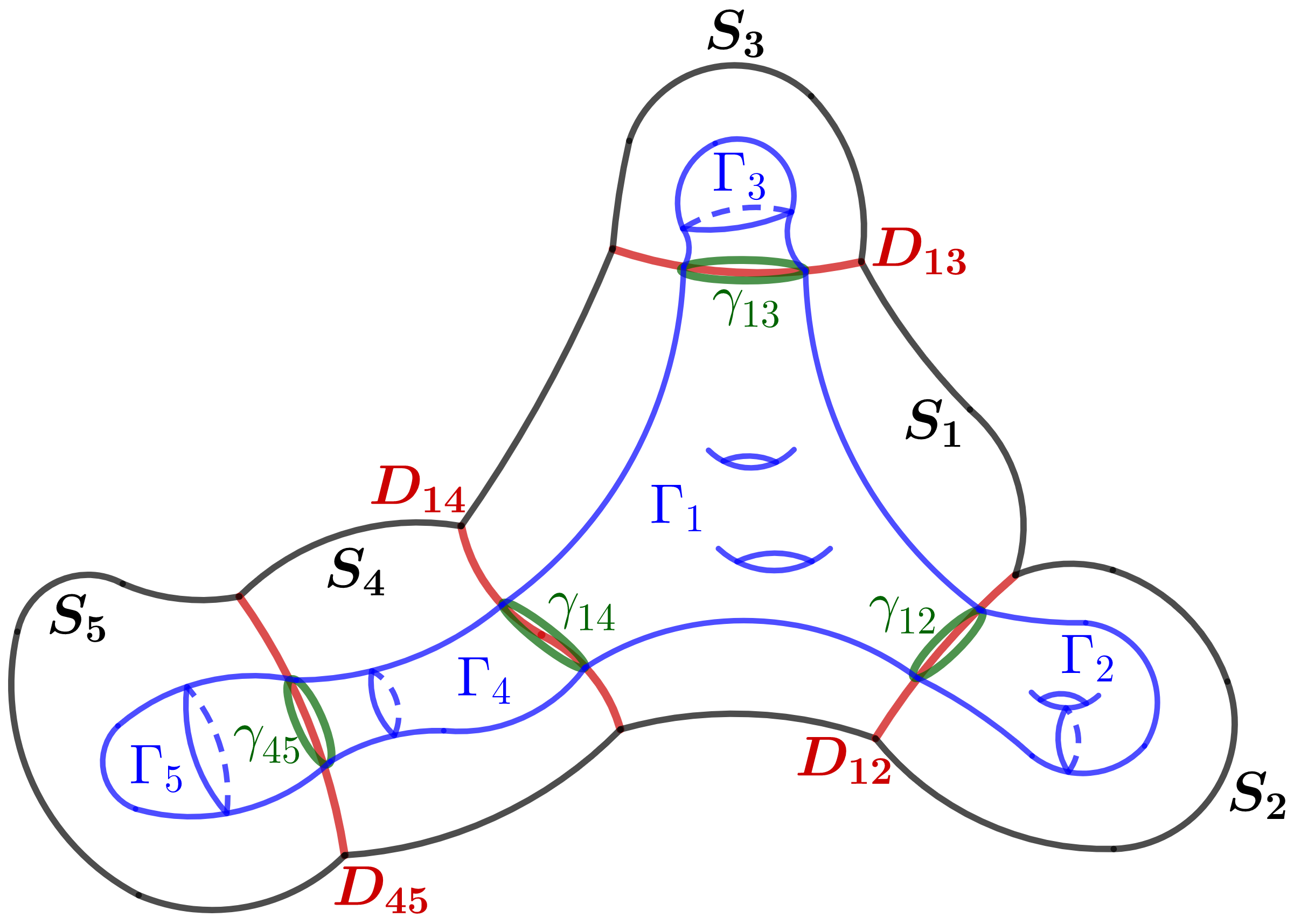}
\caption{Heuristic diagram of irreducible components $S_i$ in black, double curves $D_{ij}$ in red, $1$-cycles $\gamma_{ij}\subset D_{ij}$ in green, and
$2$-cycles $\Gamma_i\subset S_i$ capping the $1$-cycles in blue.}
\label{capping}
\end{figure}

To construct a splitting $b$, we use the \v{C}ech-de Rham model of $H^2(S_0,\C)$, and its Hodge filtration $F^1$. 
An element of $H^2(S_0,\C)$ is represented by two tuples of differential forms:
$$(\omega_i\in \cZ^2(S_i))_i\textrm{ and } (\theta_{ij}\in \mathcal{A}^1(D_{ij}))_{i<j}$$
such that for all $i<j$, we have $\omega_i|_{D_{ij}} - \omega_j|_{D_{ij}} = d\theta_{ij}$.
If furthermore $\theta_{ij}\in \mathcal{A}^{1,0}(D_{ij})$ for all $i<j$, then the element lies in $F^1H^2(S_0,\C)$.

Given $(\alpha_i)\in K = \ker({\rm res})$, we know that $\alpha_i|_{D_{ij}} - \alpha_j|_{D_{ij}}=0\in H^2(D_{ij})$. 
To define $b:K \to F^1 H^2(S_0,\C)$, select a basis for $K$; for each basis element $(\alpha_i)\in K$ there 
exists line bundles $\mathcal{L}_i$ such that $c_1(\mathcal{L}_i)=\alpha_i$. Since each $S_i$ is projective, 
we may assume that the $\mathcal{L}_i \simeq \mathcal{O}_{S_i}(C_i - C'_i)$ where $C_i$ and $C_i'$ are ample 
effective curves on $S_i$ meeting each $D_{ij}$ transversely away from $\gamma$. We take
 $\omega_i\in \cZ^2(S_i)$ representing $c_1(\mathcal{L}_i)$ and supported on a small 
 neighborhood of $C_i\cup C'_i$. Since $\omega_i|_{D_{ij}} - \omega_j|_{D_{ij}}\in \cZ^2(D_{ij})$ 
 integrates to 0, it has a $\overline{\partial}$-primitive $\theta_{ij}\in \mathcal{A}^{1,0}(D_{ij})$,
 unique up to the addition of a holomorphic one-form.

To interpret the composition $\phi=a_\C \circ b:K \to J_\C$, we will regard $J_\C$
as $\Hom(\ker(\iota_*),\C)$. Then
$(a_\C \circ b)(\alpha_i)$ is the unique homomorphism $\ker(\iota_*)\to \C$ which sends $t_k$ to 
\begin{equation}\label{integralsum}
\sum_{i=1}^m \int_{\Gamma_i^k} \omega_i + \sum_{i<j} \int_{\gamma_{ij}^k} \theta_{ij}.
\end{equation}
We henceforth drop the index $k$ as we will consider a single basis vector
$t=t_k$.

We will make two simplifications in order to compare $\phi$ with the Abel-Jacobi map.
First, the chains $\Gamma_i$ can be replaced with $\Gamma_i+x_i$ for any
$x_i\in \mathcal{Z}_2(S_i)$ such that the tuple of homology classes $(x_i)$ is 
Poincar\'{e} dual to an element of $K$. By Lefschetz duality, there is a perfect pairing associated to the 4-manifold with boundary:
$$I:H_2(S_i - N_\epsilon(\gamma))\times H_2(S_i - N_\epsilon(\gamma), \partial) \to \Z,$$
and we have $\int_{\Gamma_i} \omega_i = I(C_i - C_i', \Gamma_i) \in \Z$. Since $(\alpha_i)$ is primitive in $K$, one can find $x\in K$ such that
$$I(C_i - C_i',x) = -I(C_i - C_i', \Gamma_i).$$
So replacing $\Gamma_i$ with $\Gamma_i+x_i$, we may assume that the first 
sum in (\ref{integralsum}) vanishes.

Second, the primitives $\theta_{ij}$ are not closed, so the 
second integral does not make sense on the homology classes $[\gamma^k_{ij}]$. 
To remedy this, we construct smooth 1-forms $\lambda_{ij}\in \cZ^1(D_{ij})$ supported 
away from $\gamma$ such that $d(\theta_{ij}+\lambda_{ij})=0$. Let $\ell_{ij}$ be a 
smooth 1-chain on $D_{ij}\setminus \gamma$ with boundary the signed intersection points:
$$\partial \ell_{ij} = (C_i - C_i')\cap D_{ij} - (C_j - C_j')\cap D_{ij}.$$
By Lemma \ref{ddbarlog} below, we may produce a form $\lambda_{ij}$ supported in a
neighborhood of $\ell_{ij}$. This allows us to write the Carlson map for our extension as:
$$\phi((\alpha_i)) = \left[ t \mapsto \sum_{i<j} \int_{\lambda_{ij}} (\theta_{ij}+\lambda_{ij}) \right] \in  J_\C / (J_\Z + F^1 J_\C).$$
But for any $\tau\in \Omega^1(D_{ij})$, since $\theta_{ij}\in \mathcal{A}^{1,0}(D_{ij})$ we have, again by Lemma \ref{ddbarlog},
$$\int_{D_{ij}} (\theta_{ij}+\lambda_{ij})\wedge \tau = \int_{D_{ij}} \lambda_{ij}\wedge \tau = \int_{\ell_{ij}} \tau.$$
Observe that the classical Abel-Jacobi map ${\rm AJ}\colon {\rm Pic}^0(D)\to {\rm Jac}(D)$ 
indeed sends $[\partial \ell_{ij}] \mapsto \int_{\ell_{ij}}$. The proposition follows. \end{proof}

Now, we produce the one-form $\lambda_{ij}$ with the desired properties.

\begin{lemma}\label{ddbarlog} Let $C$ be a Riemann surface and let $\mathcal{L}=\mathcal{O}_C(q-p)$. There is a
hermitian metric $h$ on $\mathcal{L}$, a $(1,0)$-form $\theta\in \mathcal{A}^{1,0}(C)$,
and a smooth $1$-form $\lambda$ supported in a neighborhood of a path
$\ell$ from $p$ to $q$ for which:
\begin{enumerate}
\item $\overline{\partial} \theta = \frac{i}{2\pi}\partial \overline{\partial}\log (h)$, 
\item $d\lambda = -\overline{\partial} \theta$, and
\item $\int \lambda\wedge \tau = \int_{\ell} \tau$ for any holomorphic one-form $\tau$.
\end{enumerate}
\end{lemma}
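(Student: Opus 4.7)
The plan is to construct $\lambda$ first, then choose a Hermitian metric $h$ on $\mathcal L=\mathcal O_C(q-p)$ whose Chern form matches $d\lambda$, and finally produce $\theta$ by Hodge theory on the compact Riemann surface $C$. The key observation is that property (3) depends only on the $(0,1)$-component of $\lambda$, because $\tau$ is of type $(1,0)$ and $C$ is a curve, so $\lambda^{1,0}\wedge \tau=0$. By Serre duality $H^{0,1}(C)\cong H^0(C,\Omega^1)^\vee$ via the pairing $([\omega],\tau)\mapsto \int_C\omega\wedge \tau$, the functional $\tau\mapsto \int_\ell \tau$ corresponds (up to sign) to a unique class $\xi\in H^{0,1}(C)$.

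The main step is to show that $\xi$ admits a smooth representative $\omega^{0,1}$ supported in an arbitrarily small tubular neighborhood $N$ of $\ell$. Starting from any global smooth representative $\omega_0^{0,1}$, let $U:=C\setminus \overline{N'}$ for a slightly smaller tubular neighborhood $N'\Subset N$. The open Riemann surface $U$ is Stein (every noncompact Riemann surface is, by Behnke--Stein), so $H^1(U,\mathcal O)=0$ and $\omega_0^{0,1}|_U=\bar\partial g$ for some smooth $g$ on $U$. Extending $g$ to a smooth function $\tilde g$ on $C$ via a cutoff equal to $g$ outside $N$ and to $0$ on a smaller neighborhood of $\ell$, the form $\omega^{0,1}:=\omega_0^{0,1}-\bar\partial \tilde g$ is Dolbeault cohomologous to $\omega_0^{0,1}$ and vanishes on $C\setminus N$, hence is supported in $\overline N$. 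Set $\lambda:=\omega^{0,1}+\overline{\omega^{0,1}}$, a real smooth $1$-form supported in $N$; property (3) is then immediate from the defining property of $\xi$ (the $(1,0)$-piece contributes nothing to the pairing).

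With $\lambda$ in hand, $d\lambda=\partial\omega^{0,1}+\overline{\partial\omega^{0,1}}$ is a real smooth $(1,1)$-form supported in $N$ with $\int_C d\lambda=0$ by Stokes. Since $\deg\mathcal L=0$, it represents the class $c_1(\mathcal L)=0$ in $H^{1,1}(C,\mathbb R)$. The $\partial\bar\partial$-lemma on the compact Riemann surface then provides a Hermitian metric $h$ on $\mathcal L$ whose Chern form $-\tfrac{i}{2\pi}\partial\bar\partial\log h$ equals $d\lambda$; explicitly, start from any initial $h_0$, find a real smooth $\psi$ solving $\tfrac{i}{2\pi}\partial\bar\partial\psi=c_1(\mathcal L,h_0)-d\lambda$, and set $h:=h_0\cdot e^{-\psi}$. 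The smooth $(1,1)$-form $\tfrac{i}{2\pi}\partial\bar\partial\log h=-d\lambda$ has zero integral on $C$, so Hodge theory produces $\theta\in \mathcal A^{1,0}(C)$ with $\bar\partial\theta=\tfrac{i}{2\pi}\partial\bar\partial\log h$, establishing (1); then (2) reads $d\lambda=-(-d\lambda)=-\bar\partial\theta$. The main obstacle is the cutoff/Stein argument producing a representative of $\xi$ with prescribed support; the remaining steps are standard Chern--Weil and Hodge theory on a Riemann surface.
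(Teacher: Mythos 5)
Your proof is correct in substance, but it takes a genuinely different route from the paper. The paper's argument is completely explicit: it cuts off the meromorphic function $\tfrac{z-q}{z-p}$ to a function $f$ equal to $1$ away from $\ell$, defines the metric by $h(s,\overline{s})=|f|^2$ for a meromorphic section $s$ with divisor $q-p$, takes $\lambda=-\tfrac{i}{2\pi}(\overline{\partial}\log f-\partial\log\overline{f})$ supported in the interpolation annulus, and verifies (3) by a direct Stokes-plus-residue computation. You instead work softly: you note (3) sees only the $(0,1)$-part of $\lambda$, use Serre duality to realize $\tau\mapsto\int_\ell\tau$ by a class $\xi\in H^{0,1}(C)$, localize a representative of $\xi$ near $\ell$ via the Behnke--Stein/Cartan B vanishing on the Stein complement plus a cutoff, and then produce $h$ and $\theta$ from the $\partial\overline{\partial}$-lemma and Hodge theory. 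Your approach makes transparent that the statement needs only $\deg\mathcal{L}=0$ (so the generalization to $\mathcal{O}_C(\sum(q_i-p_i))$ noted after the lemma is automatic), and it cleanly explains \emph{why} such a $\lambda$ exists; the paper's construction buys explicit formulas (useful for seeing the support and the interplay with the metrics $h_i$ in the application) and avoids invoking Stein theory. Two small points: your explicit recipe for $h$ has a sign slip --- with Chern form $-\tfrac{i}{2\pi}\partial\overline{\partial}\log h$ and $\psi$ solving $\tfrac{i}{2\pi}\partial\overline{\partial}\psi=c_1(\mathcal{L},h_0)-d\lambda$, you must take $h=h_0e^{+\psi}$ (or flip the right-hand side) to get $c_1(\mathcal{L},h)=d\lambda$; and the "up to sign" hedge in the Serre duality step is unnecessary, since the pairing is perfect and determines $\xi$ exactly. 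Neither affects correctness.
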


\begin{proof} Let $z$ be a chart to $\C$ from a neighborhood of $\ell$.
There exists a function $f\colon C\setminus \{p,q\}\to \C^*$ of the following form: 
$$f = \threepartdef{\frac{z-q}{z-p}}{z\in N_{\epsilon/2}(\ell)}{\textrm{smooth interpolation}}
{z\in N_{\epsilon/2}(\ell)^c\cap N_\epsilon(\ell)}{1}{z\notin N_\epsilon(\ell).}$$ Such a smooth interpolation exists
because $\frac{z-q}{z-p}$ has winding number zero along the boundary of $N_{\epsilon/2}(\ell)$.
Let $s\in {\rm Mero}(C, \mathcal{L})$
be a meromorphic section with a zero at $q$ and a pole at $p$. Then, there is a hermitian metric $h$
on $\mathcal{L}$ for which $h(s,\overline{s})=|f|^2$. The associated curvature form is
 $\tfrac{i}{2\pi}\partial \overline{\partial} \log |f|^2$ and since $c_1(\mathcal{L})=0$, we can find a $(1,0)$-form $\theta$ satisfying (1).
Furthermore, $\lambda = -\frac{i}{2\pi}(\overline{\partial} \log(f)-\partial \log(\overline{f}))$ is a $(0,1)$-form, supported in
$A:=N_{\epsilon/2}(\ell)^c\cap N_\epsilon(\ell)$ and satisfying (2).

It remains to check (3). We may write $\tau = dg$ for some holomorphic function 
$g\colon N_\epsilon(\ell)\to \C$. Applying Stokes' formula and the residue formula, we have
\begin{align*}\int_C \lambda\wedge \tau &=- \tfrac{i}{2\pi} \int_A\overline{\partial}\log(f) \wedge dg
= \tfrac{i}{2\pi} \int_A d(ig\cdot d\log(f))
= \tfrac{i}{2\pi}\int_{\partial A} ig\cdot d\log(f)  \\ 
&= -\tfrac{i}{2\pi}\int_{\partial N_{\epsilon/2}(\ell)}g \cdot d\log(\tfrac{z-q}{z-p}) 
= -\tfrac{i}{2\pi} (2\pi i)(g(q)-g(p)) = \int_\ell \tau. \end{align*}
 \end{proof}
 
 More generally, the lemma holds for any degree zero line bundle $\mathcal{O}_C(\sum (q_i-p_i))$,
 for a union of paths connecting each pair of points $p_i$ to $q_i$ by taking
the product of the hermitian metrics, and sum of the corresponding $\theta$'s and $\lambda$'s.

\begin{remark} To apply Lemma \ref{ddbarlog} to the proof of Proposition \ref{hodge-ext}, 
our forms $\omega_i$ must be such that $\omega_i|_{D_{ij}}-\omega_j|_{D_{ij}}$ is the
two-form $\frac{i}{2\pi} \partial\overline{\partial}\log(h)$ supported in a neighborhood of $\ell_{ij}$.
This is achieved by choosing 
$\omega_i = \frac{i}{2\pi} \partial \overline{\partial} \log(h_i)$ for hermitian metrics on $h_i$
on $\mathcal{L}_i$ (and similarly for $j$) so that $h=h_i/h_j$ is the desired hermitian metric on
$\mathcal{L}_i|_{D_{ij}}\otimes\mathcal{L}_j|_{D_{ij}}^{-1}$. Note though that we must allow
the two-form $\omega_i$ to be supported in a tubular neighborhood of $C_i\cup C_i'\cup \ell_{ij}$
rather than just $C_i\cup C_i'$. Since $\ell_{ij}$ is disjoint from $\gamma$, the argument
of Lemma \ref{hodge-ext} is unaffected. \end{remark}

\subsection*{Clemens-Schmid sequence}

Let $\mathcal{S}\to (B,0)$ be a degeneration of projective surfaces with smooth
total space, and reduced normal crossings central fiber $S_0=\bigcup_{i=1}^m S_i$ with smooth double locus. 
Assume furthermore that $p_g(S_i)=0$ for all $i$.

The monodromy
is unipotent by Clemens \cite{clemens1969picard}.
So let $N$ be the nilpotent logarithm of the
monodromy operator on $H^*(S_t)$.
We have the Clemens-Schmid sequence \cite{morrison1984clemens}
relating the integral cohomology of
$S_0$ and $S_t$:
\begin{equation}\label{cssequence}
0 \to H^0(S_t) \overset{N}\longrightarrow H^0(S_t) \to H_4(S_0) \to H^2(S_0) \to H^2(S_t) \overset{N}\longrightarrow H^2(S_t) .
\end{equation}
Since the monodromy operator acts trivially on $H^0(S_t)$, the first nilpotent operator in
(\ref{cssequence}) is identically $0$. 
Using these two observations, the Clemens-Schmid sequence can be shortened to:
\begin{equation}\label{cssequence2}
0 \to H^0(S_t) \to H_4(S_0)\simeq \Z^m \to H^2(S_0) \to H^2(S_t) \overset{N} \longrightarrow H^2(S_t) .
\end{equation}

The limit mixed Hodge structure $H^2(S_t)$ has a monodromy-weight filtration defined in terms
of $N$:  $\{0\} = W_0 \subset W_1 \subset W_2 \subset W_3 = H^2(S_t)$.
\begin{align*}
W_1 H^2(S_t) &= \im(N) ;\\
W_2 H^2(S_t) &= \ker(N) ;\\
W_3 H^2(S_t) &= H^2(S_t).
\end{align*}
We call $\ker(N)$ the {\it 1-truncated mixed Hodge structure}. To describe the 1-truncation explicitly, we
combine (\ref{cssequence2}) and (\ref{mayerv}) above at their common term
$H^2(S_0)$, with Mayer-Vietoris written horizontally
and Clemens-Schmid written vertically.

$$\begin{tikzcd}
 &   &  \im H_4(S_0) \arrow[d] \arrow[r] & \textrm{span}\{\xi_k\}   \arrow[d] &     \\
0 \arrow[r] & J  \arrow[r] \arrow[d] & H^2(S_0)  \arrow[r]   \arrow[d] & K \arrow[d] \arrow[r]  & 0 \\ 
0 \arrow[r] &  J \arrow[r] &  \ker(N) \arrow[r] & \Lambda \arrow[r] & 0.
\end{tikzcd}$$

Here, $\xi_k:= \sum_j [D_{jk}]-[D_{kj}]$, where $[D_{jk}]\in H^2(S_j)$ and $[D_{kj}]\in H^2(S_k)$ are the fundamental
classes of the double loci, and $\Lambda$ is the cokernel of $J\to \ker(N)$. We have that $\xi_k= c_1(\mathcal{O}_{\mathcal{S}}(S_k)|_{S_0})$. 
By Proposition \ref{hodge-ext}, we have $\xi_k\in \ker(\phi\colon K\to {\rm Jac}(J))$ because the line bundles
$\mathcal{O}_{\mathcal{S}}(S_k)\big{|}_{S_i}\simeq \mathcal{O}_{\mathcal{S}}(S_k)\big{|}_{S_j}$ agree
on the double locus. Hence, the Carlson extension homomorphism $\phi$ descends to a homomorphism
$$\psi_{S_0}\colon \Lambda\to {\rm Jac}(J)$$ encoding the 1-truncated mixed Hodge structure.

\subsection*{Application}

In this section, we apply the general results above to the mixed Hodge structures
associated to the degenerations of Type II$_b$ and II$_f$,
and relate their associated periods to the boundary of the
toroidal extension $(\bD/\Gamma)^{\rm II}$.

It is convenient to make an order $2$ base change
and resolution to the Type II$_b$ degenerations. The effect is to normalize the first component,
and insert a second component isomorphic to $\mathbb{P}^1\times E$ where
$E$ is the fiber over the node of $C_0$. This second
component is glued to the rational elliptic surface $X\to \mathbb{P}^1$ along the two fibers
$X_p, X_q$.

After the base change and resolution, we have that in both II$_b$ and II$_f$ degenerations,
the central fiber $S_0$ has two irreducible components and reduced normal crossings:
$S_0 = S_1 \cup_D S_2$. The double locus $D$ is a disjoint union of two copies
of the same elliptic curve $E$ in Type II$_b$ and a connected, smooth genus $2$
curve in Type II$_f$. Let $D_1\subset S_1$ and
$D_2\subset S_2$ denote the double locus restricted to each component.

% Let $N$ be the nilpotent logarithm of the
%monodromy operator on $H^*(S_t)$.
%We have the Clemens-Schmid sequence
%relating the integral (co)homology groups of
%$S_0$ and $S_t$:
%\begin{equation}\label{cssequence}
%0 \to H^0(S_t) \overset{N}\longrightarrow H^0(S_t) \to H_4(S_0) \to H^2(S_0) \to H^2(S_t) \overset{N}\longrightarrow H^2(S_t) .
%\end{equation}
%Since the monodromy operator acts trivially on $H^0(S_t)$, the first nilpotent operator in
%(\ref{cssequence}) is identically $0$. 
%Using these two observations, the Clemens-Schmid sequence can be shortened to:
%\begin{equation}
%0 \to H^0(S_t) \to H_4(S_0)\simeq \Z^2 \to H^2(S_0) \to H^2(S_t) \overset{N} \longrightarrow H^2(S_t) .
%\end{equation}
%Next, we write the Mayer-Vietoris sequence for the cohomology of $S_0$ with respect to a covering by two open sets which retract to the components $S_1$ and $S_2$:
%\begin{equation}\label{mvsequence}
%H^1(S_1) \oplus H^1(S_2) \to H^1(D) \to H^2(S_0) \to H^2(S_1)\oplus H^2(S_2) \overset{r}\to H^2(D).
%\end{equation}

In both cases, the divisor $D$ admits a natural involution $\iota$, and the image of the first
map $\iota^*$ in (\ref{mayerv})
is the $(+1)$-eigenspace of this involution on $H^1(D)$. The image of the restriction map ${\rm res}$ in 
(\ref{mayerv}) is a rank 1 subgroup of $H^2(D)\simeq H_0(D)$,
so the Mayer-Vietoris sequence takes the form:
\begin{equation}\label{mvsequence2}
0 \to H^1(D)^- \to H^2(S_0) \to H^2(S_1)\oplus H^2(S_2) \overset{\rm res}\longrightarrow \Z \to 0.
\end{equation}
%In addition to being exact sequences of abelian groups, these are exact
%sequences in the abelian category of mixed Hodge structures. The kernel of $r$
%consists exactly of the pairs of cohomology classes $(\alpha_1,\alpha_2)$ for which
%$\alpha_1\cdot D_1=\alpha_2\cdot D_2$. 
%\vspace{5pt}

{\bf Case II$_b$.} The component $S_1$ is a rational elliptic surface $X$,
with $D_1 = X_p \cup X_q$ a pair of isomorphic elliptic curve fibers.
The component $S_2$ is simply $\bP^1\times E$ with $D_2=\{0,\infty\}\times E$.
The involution on $D$ swaps the two isomorphic components. Note that since
$[X_p] = [X_q]\in H^2(S_1)$, and similarly for $S_2$, the two restriction maps
$H^2(S_i) \to H^2(D)\simeq H^2(E)^{\oplus 2}$ have the same image, namely
the diagonal. \vspace{5pt}

{\bf Case II$_f$.} The component $S_1$ is an elliptic ruled surface
$X\simeq \bP_C(\cO\oplus L)$, with $D_1$ a genus 2 bisection of class
$2s_0 = 2(s_\infty+f)$. The component $S_2$ is the blow up of (a deformation of)
$S_1$ at 8 points along $D_1$ with $D_2$ the proper transform of $D_1$ in the
blow up. The class of $D_2$ is $2s_0 - \sum e_i$. The involution on $D$ is induced
by the double cover map $\nu\colon D \to C$ which comes from the ruling of $X$. Since $D$ is
irreducible, $H^2(D)\simeq \Z$. \vspace{5pt}

In both cases, the Jacobian ${\rm Jac}(H^1(D)^-)=E$ is an elliptic curve.
In Type II$_b$ it is ${\rm Jac}(E)$ where $E$ is either of the double curves, while
in Type II$_f$ it is the Prym variety of the double cover map $\nu\colon D\to C$.
Thus, the mixed Hodge structure on $H^2(S_0)$ is encoded by a Carlson
extension map $\phi\in {\rm Hom}(K,E)$. By the previous subsection, 
this extension homomorphism descends to $\psi_{S_0}\in {\rm Hom}(\Lambda,E)$
where $$\Lambda = K/{\rm span}\{\xi_1,\xi_2\} = 
\ker(H^2(S_1)\oplus H^2(S_2)\overset{\rm res}\longrightarrow \Z)/\Z(D_1,-D_2).$$

%\begin{proposition} Let $(\alpha_1,\alpha_2)\in {\rm ker}(r)$. Then
%we have the following formula: $$\widetilde{\varphi}_{S_0}(\alpha_1,\alpha_2)=
%\pi^-(L_1\big{|}_D\otimes
%L_2^{-1}\big{|}_D)$$
%where $L_i\in {\rm Pic}(S_i)$ are line bundles representing $\alpha_i\in H^2(S_i)$.
%\end{proposition}
%
%Here $\pi^-\colon {\rm Pic}^0(D)\to E$ is the composition of the Abel-Jacobi isomorphism
%${\rm Pic}^0(D)\to {\rm Jac}(D)$ with the quotient map
%${\rm Jac}(H^1(D))\to {\rm Jac}(H^1(D)^-)$ of abelian varieties,
%induced by the inclusion $H^1(D)^-\hookrightarrow H^1(D)$. The line bundle $L_i$
%is not unique in the Type II$_b$ case, but ambiguity of the choice of lift
%is killed by $\pi^-$.
%
%
%\begin{proof} \end{proof}

There is a symmetric bilinear form on $H^2(S_0)$. Let
$$p\colon H^2(S_0)\to H^2(S_1)\oplus H^2(S_2)\xrightarrow{\rm PD}
H_2(S_1)\oplus H_2(S_2)\to H_2(S_0)$$
be restriction, followed by the Poincar\'e duality, followed by inclusion. Then
define $\alpha\cdot \beta := \langle \alpha, p(\beta)\rangle$ on $H^2(S_0)$. 
The map $H^2(S_0)\to H^2(S_t)$ respects the bilinear forms
on the source and target and the bilinear form descends to $K=\ker({\rm res})$.

By Poincar\'{e} duality and the Hodge index theorem, $H^2(S_1)\oplus H^2(S_2)$ is a
unimodular lattice of signature $(2,10)$, and it is odd since at least one summand contains $(-1)$-curves. Since $D_1^2+D_2^2 =0$, the lattice
vector $(D_1,-D_2)$ is isotropic, and its orthogonal complement
is precisely $\ker({\rm res})$. Hence the lattice $\Lambda$ is unimodular of signature $(1,9)$.

Our degenerating families are polarized by $\Z s\oplus \Z(s+f)\subset H^2(S_t)$. The monodromy operator
fixes these curve classes and hence we have a copy of $I_{1,1}\subset \ker(N)$. So $s$, $f$ extend over the
singular fiber by (\ref{cssequence}). They can be represented in $K$ as follows:
$(s,s)$, $(f,0)$ for Type II$_b$ and $(s_\infty, 0)$, $(f,f)$ for Type II$_f$, respectively.
In both cases, they span a sublattice of $\Lambda$ isometric to $I_{1,1}$
whose orthogonal complement we call $\Lambda_0\subset \Lambda$. We also have $\Lambda_0\simeq \Lambda/I_{1,1}$ canonically.
\begin{proposition}
The lattice $\Lambda_0$ is isometric to $E_8$ in both cases.
\end{proposition}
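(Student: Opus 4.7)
The plan is to show $\Lambda_0$ is even, unimodular, negative-definite of rank $8$; since $E_8$ is the unique such lattice, this will give the result.

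Unimodularity is almost free. Because $|\mathrm{disc}(I_{1,1})|=1$ and $\mathrm{disc}(L') = [L/L']^2\mathrm{disc}(L)$ for any finite-index extension, $I_{1,1}$ is necessarily primitive in $\Lambda$. Hence $\Lambda_0=I_{1,1}^\perp$ is unimodular of rank $8$, and the Hodge index theorem applied to $\Lambda$ makes it negative definite.

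The essential step is evenness, which I would prove by producing a characteristic (Wu) class $\kappa\in \Lambda$ for the intersection pairing that lies in $I_{1,1}$: then $\beta^2 \equiv \beta\cdot\kappa \equiv 0\pmod{2}$ for every $\beta\in \Lambda_0$. The natural candidate is the dualizing class $\omega_{S_0}=K_{\mathcal S}|_{S_0}$. Using $S_i|_{S_i}=-D_i$ (which holds because $S_1+S_2$ is the class of a fiber of $\mathcal S\to B$), adjunction gives $K_{\mathcal S}|_{S_i}=K_{S_i}+D_i$, so $\omega_{S_0}$ is represented in $H^2(S_1)\oplus H^2(S_2)$ by
$$\kappa := (K_{S_1}+D_1,\, K_{S_2}+D_2).$$

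Next I would verify three points. First, $\kappa\in K=\ker(\mathrm{res})$: adjunction for $D\subset S_i$ gives $(K_{S_i}+D_i)|_D = 2g(D)-2$ independently of $i$. Second, $\kappa$ is characteristic for the direct-sum pairing on $K$: for $(\alpha_1,\alpha_2)\in K$, Wu's formula on each $S_i$ gives $\alpha_i\cdot K_{S_i}\equiv \alpha_i^2\pmod 2$, while the restriction condition forces $\alpha_1\cdot D_1 = \alpha_1|_D = \alpha_2|_D = \alpha_2\cdot D_2$, making the correction $\alpha_1 D_1+\alpha_2 D_2$ even. Third, the image $[\kappa]\in\Lambda = K/\mathbb Z\xi$ lies in $I_{1,1}$; this is a direct check using the data of Section~\ref{sec:second-degen}. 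In Type II$_b$, $K_X+D_1 = -f_X+2f_X = f_X$ and $K_{\mathbb P^1\times E}+D_2 = -2[\{pt\}\times E]+2[\{pt\}\times E] = 0$, so $\kappa=(f,0)$, the listed generator of $I_{1,1}$. In Type II$_f$, $K_X+D_1 = (-f-2s_\infty)+(2s_\infty+2f) = f$ and $K_V+D_2 = (-f-2s_\infty+\sum_{i=1}^8 E_i)+(2s_\infty+2f-\sum_{i=1}^8 E_i) = f$, so $\kappa=(f,f)$, again the stated generator. The main technical point is the adjunction formula for $\omega_{S_0}|_{S_i}$ and keeping the signs of $S_i|_{S_i}=-D_i$ straight; everything else is bookkeeping.
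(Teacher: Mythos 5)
Your proposal is correct, and it establishes the only nontrivial point (evenness of $\Lambda_0$) by a genuinely different mechanism than the paper. The paper's proof works on the nearby smooth fiber: the orthogonal complement of $\{s,f\}$ in $\ker(N)$ is even because $f=K_{S_t}$ is characteristic in $H^2(S_t)$ by Wu's formula, and this evenness is transported to $\Lambda_0$ along the form-preserving map $\ker(N)\to\Lambda$ coming from the Clemens--Schmid setup. You instead stay entirely on the central fiber: you exhibit the dualizing class $\omega_{S_0}$, represented by $\kappa=(K_{S_1}+D_1,\,K_{S_2}+D_2)\in K$, as a characteristic vector for the form on $K$ (Wu on each component plus the matching condition $\alpha_1\cdot D_1=\alpha_2\cdot D_2$ forced by $\ker(\mathrm{res})$), and then check by direct computation that its image in $\Lambda$ is $(f,0)$ resp.\ $(f,f)$, i.e.\ exactly the representative of $f\in I_{1,1}$ listed in the paper, so that every $\beta\in\Lambda_0=I_{1,1}^\perp$ has $\beta^2\equiv\beta\cdot f\equiv 0\pmod 2$. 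What your route buys: it needs no limit mixed Hodge structure or Clemens--Schmid input, and it sidesteps the (unstated) surjectivity of $\{s,f\}^\perp\cap\ker(N)$ onto $\Lambda_0$ that the paper's one-line transport implicitly uses, since any $\beta\in\Lambda_0$ visibly lifts to $K$ and the pairing descends because $(D_1,-D_2)$ is in the radical of the form on $K$. What it costs is the component-by-component bookkeeping with $K_{S_i}$ and $D_i$, which the paper avoids. Two cosmetic points: the phrase ``Hodge index theorem applied to $\Lambda$'' should really refer to the signature $(1,9)$ of $\Lambda$ already computed from $H^2(S_1)\oplus H^2(S_2)$ of signature $(2,10)$ modulo the isotropic vector $(D_1,-D_2)$; and in Type II$_b$ the double curve is disconnected, so ``$(K_{S_i}+D_i)|_D$ has degree $2g(D)-2$'' should be read per component (degree $0$ on each elliptic component) --- your explicit class computation $\kappa=(f,0)$ makes this moot. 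Neither affects correctness.
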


\begin{proof}
Note that $\Lambda_0$ is unimodular of signature $(0,8)$,
so it suffices to check that it is even. The orthogonal complement of $\{s,f\}$
in $\ker(N)$ is even because $f=K_{S_t}$ and $x\cdot x\equiv x\cdot K_{S_t}\textrm{ mod }2$
for any $x\in H^2(S_t)$. Hence, its image $\Lambda_0$ is even
because $\ker(N)\to \Lambda$ preserves the intersection form.
% using the fact that for any divisor class $\alpha$ on an algebraic surface $S$,
%$$\alpha \cdot \alpha \equiv \alpha\cdot K_S \,\,\text{(mod } 2).$$
%For Type II$_b$, let $(\alpha,\beta)\in \ker(r)$ be orthogonal to $(s,s)$ and $(f,0)$. Since $K_{S_1} = f$ and $H^2(S_2)$ is an even lattice already,
%\begin{align*}
%(\alpha,\beta)\cdot (\alpha,\beta) &= \alpha\cdot \alpha + \beta\cdot \beta \\
%&\equiv \alpha\cdot K_{S_1} \,\,\text{(mod } 2) \\ 
%&= \alpha \cdot f\\
%&= (\alpha,\beta)\cdot (f,0)\\
%&= 0.
%\end{align*}
%For Type II$_f$, let $(\alpha,\beta)\in \ker(r)$ be orthogonal to $(s_{\infty},0)$ and $(f,f)$.
%\begin{align*}
%(\alpha,\beta) \cdot (\alpha,\beta) &=  \alpha\cdot \alpha + \beta\cdot \beta \\
%&\equiv  \alpha\cdot K_{S_1} + \beta\cdot K_{S_2}\,\,\text{(mod } 2) \\ 
%&= \alpha\cdot (-2s_\infty+f) + \beta\cdot (-2s_\infty+f+\sum e_i)\\
%&\equiv (\alpha,\beta)\cdot (f,f) + \beta\cdot \sum e_i \,\,\text{(mod } 2)\\
%&= \beta\cdot \sum e_i\\
%&\equiv (\alpha,\beta)\cdot (D_1,-D_2) \,\,\text{(mod } 2)\\
%&= 0.
%\end{align*}
\end{proof}

\begin{remark}
The lattice $\Lambda_0$ can be described more directly using one irreducible component
(only up to finite index in the Type II$_f$ case). For Type II$_b$, the sublattice
$\{s,f\}^\perp \subset H^2(S_1)$ lies in $K$ and is even, unimodular of signature
$(0,8)$. So it maps isometrically to $\Lambda_0\simeq E_8$.
For II$_f$ the sublattice $\{D_2,f\}^\perp \subset H^2(S_2)$ lies in $K$ and
so maps isometrically to an index two sublattice $D_8\subset \Lambda_0\simeq E_8$.
\end{remark}

We summarize the results of this section in the following proposition:

\begin{proposition}\label{period-prop} Let $\mathcal{S}\to (B,0)$ be a degeneration of Type II$_b$ or Type II$_f$.
Let $K = \ker(H^2(S_1)\oplus H^2(S_2)\to H^2(D))$ be the kernel of signed restriction, and let
$\Lambda:=K/\Z(D_1,-D_2)$ and $\Lambda_0=\{s,f\}^\perp \subset \Lambda$. Let $E$ be 
 $\rm Pic^0$ of either double curve in Type II$_b$ and the Prym variety
 ${\rm Pic}^0(D)/{\rm Pic}^0(C)$ in Type II$_f$. 
 
The Carlson extension class $\phi\in {\rm Hom}(K,E)$ describing the mixed Hodge structure on
$S_0$ descends to  ${\rm Hom}(\Lambda,E)$, and so determines the $1$-truncated limit mixed
Hodge structure of the degeneration. This homomorphism further descends to
a period point $\psi_{S_0}\in {\rm Hom}(\Lambda_0,E)$ where $\Lambda_0\simeq E_8$. Explicitly:
\begin{enumerate}
\item[(II$_b$)] The period point $\psi_{S_0}$ given by the map sending
$\cL\in \{s,f\}^\perp\subset {\rm Pic}(S_1)$ to $\cL\big{|}_{X_p}\otimes \cL\big{|}_{X_q}^{-1}\in E$.
\smallskip
\item[(II$_f$)] The period point $\psi_{S_0}$ is determined up to $2$-torsion by the map
sending $c_1(\cL)\in \{D,f\}^\perp\subset H^2(S_2)$ to $\cL\big{|}_D \in {\rm Pic}^0(D)/{\rm Pic}^0(C)=E$.
\end{enumerate}
\end{proposition}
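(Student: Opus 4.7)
The plan is to assemble the preceding results of this section---the Mayer--Vietoris sequence (\ref{mvsequence2}), the Clemens--Schmid sequence, Proposition \ref{hodge-ext}, and the identification $\Lambda_0 \simeq E_8$ just proved---and to read off the period homomorphism explicitly. In both II$_b$ (components: a rational elliptic surface $X$ and $\mathbb{P}^1 \times E$) and II$_f$ (components: two ruled surfaces over $C$), every component has $p_g(S_i)=0$, so Proposition \ref{hodge-ext} applies and identifies $\phi \colon K \to \mathrm{Jac}(J)$ with $\pi \circ \mathrm{AJ} \circ \psi$, where $\psi(\alpha_1,\alpha_2) = \cL_1|_D \otimes \cL_2|_D^{-1} \in \mathrm{Pic}^0(D)$. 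From (\ref{mvsequence2}), $J \simeq H^1(D)^-$, so $\mathrm{Jac}(J) \simeq E$ in both cases, with $\pi$ the antidiagonal projection $E \times E \to E$ in II$_b$ and the Prym quotient $\mathrm{Pic}^0(D) \to \mathrm{Pic}^0(D)/\mathrm{Pic}^0(C)$ in II$_f$.

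Next I would verify the two descents. The class $\xi_k = c_1(\cO_\cS(S_k)|_{S_0})$ is the restriction of a global line bundle on the total space, so its component restrictions agree on the double locus, whence $\phi(\xi_k) = 0$ by the $\psi$-formula; this descends $\phi$ to a map $\Lambda \to E$. Similarly, the polarization classes $s$ and $f$ extend to global line bundles on $\cS$---the section persists through the family, and the fiber class is pulled back from the base---so the same argument gives $\phi(s) = \phi(f) = 0$. Thus $\phi$ descends further to $\psi_{S_0} \colon \Lambda_0 \to E$.

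For the explicit formulas I would use the Remark that identifies $\Lambda_0$ (up to index two) with $\{s,f\}^\perp$ inside a single component. In II$_b$, $\{s,f\}^\perp \subset H^2(X)$ maps isometrically to $\Lambda_0 \simeq E_8$; lifting $\alpha \in \{s,f\}^\perp$ to $\cL \in \mathrm{Pic}(X)$ and taking the trivial lift on $S_2$, the pair $(\alpha,0)$ lies in $K$ because $\alpha \cdot f = 0$ forces $\deg \cL|_{X_p} = \deg \cL|_{X_q} = 0$. Applying $\psi$ gives $(\cL|_{X_p}, \cL|_{X_q}) \in E \times E$, which under $\pi$ maps to $\cL|_{X_p} \otimes \cL|_{X_q}^{-1} \in E$. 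In II$_f$, $\{D_2, f\}^\perp \subset H^2(S_2)$ maps to the index-two sublattice $D_8 \subset \Lambda_0 \simeq E_8$; lifting $\alpha$ to $\cL$ on $S_2$ and taking the trivial lift on $S_1$, $\psi$ yields $\cL|_D \in \mathrm{Pic}^0(D)$ (degree zero since $\alpha \cdot D_2 = 0$), which projects to $E$. The $2$-torsion indeterminacy on the full $E_8$ is precisely the torsor structure $\mathrm{Hom}(E_8/D_8, E) = E[2]$ recorded in the earlier remark.

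The main obstacle is the involution bookkeeping required to identify $\mathrm{Jac}(J) = \mathrm{Jac}(H^1(D)^-)$ with $E$ and to see that $\psi$ automatically lands in the $\iota$-anti-invariant part of $\mathrm{Pic}^0(D)$. This follows because $c_1(\cL_1|_D) = c_1(\cL_2|_D)$ lies in the $\iota$-invariant part, so their difference $\cL_1|_D \otimes \cL_2|_D^{-1} \in \mathrm{Pic}^0(D)$ has anti-invariant class, whence the projection to $\mathrm{Jac}(J) \simeq E$ is well-defined. Everything else is direct from the explicit Abel--Jacobi formula of Proposition \ref{hodge-ext}.
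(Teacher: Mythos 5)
Your proposal is correct and follows essentially the same route as the paper, which offers no separate proof: the proposition is stated as a summary of the section, assembled exactly as you do from Proposition \ref{hodge-ext}, the Mayer--Vietoris sequence (\ref{mvsequence2}), the Clemens--Schmid descent via the classes $\xi_k$, the identification ${\rm Jac}(H^1(D)^-)=E$, and the remark realizing $\Lambda_0$ (resp.\ an index-two $D_8$) inside a single component. The only quibbles are cosmetic: the signed-restriction convention makes your II$_f$ lift give $\cL|_D^{-1}$ rather than $\cL|_D$, and the projection ${\rm Pic}^0(D)\to {\rm Jac}(J)$ is well defined without the anti-invariance remark, so neither point affects the argument.
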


\section*{Appendix: Compact moduli}

KSBA theory \cite{kollar1988threefolds-and-deformations, alexeev1996moduli-spaces, kollar2023families}
gives a general method for constructing compact
moduli spaces of pairs $(X,B)$, consisting of a projective variety $X$
and a $\Q$-Weil divisor $B$, which form a so-called {\it stable slc pair}:
\begin{enumerate}
\item the pair $(X,B)$ has semi-log canonical singularities,
\item $K_X+B$ is $\Q$-Cartier and ample.
\end{enumerate}
In the case at hand, the pair $(\oS,\epsilon s)$ satisfies these conditions,
where $S\to \oS$ is the contraction to the Weierstrass form. The paper \cite{ascher2021moduli} of Ascher and Bejleri with an appendix by Inchiostro
 studies the corresponding
compactification by stable slc pairs $F\hookrightarrow \oF^W.$
Every degeneration with generic fiber in $F$ has a unique limit in $\oF^W$
called the {\it stable model}.

No information is lost when considering
Type II$_b$ degenerations because the stable
model $\oS_0$ uniquely determines $S_0$: It is the resolution of
ADE configurations in fibers. On the other hand, for Type II$_f$ degenerations,
most period information is lost: the stable model $\oS_0$ is the
gluing of $\bP_C(\cO\oplus L)$ along the bisection $D$.
Thus, the locus in $\oF^W$ corresponding to Type II$_f$ degenerations
has dimension $2$, remembering only the genus $2$ double cover $\nu\colon D\to C$.

To record more period information, we can instead choose a different
divisor on the general surface $S\in F$. Let $\textstyle R:=s+\sum_{i=1}^{12} f_i$
where $f_i$ are the singular fibers of $S\to C$,
counted with multiplicity. Because $(\oS,\epsilon R)$ is a stable slc pair, we may again
compactify the moduli space of such pairs using KSBA theory:
$F\hookrightarrow \oF^R$ where  $\oF^R$ is the closure
of the pairs $\{(\oS,\epsilon R)\,\big{|}\,S\in F\}$ in moduli of all stable slc
pairs.
Up to a finite map, $\oF^R$ remembers the period information of a Type II$_f$
degeneration (and this is still so for Type II$_b$ surfaces).

Thus, it is possible that the normalization of
$\oF^R$ actually dominates a toroidal compactification of $\bD/\Gamma$. An analogous result
for elliptic K3 surfaces $(g,d)=(0,2)$ holds by \cite{alexeev2020compactifications-moduli}.
We leave this as a conjecture:

\begin{conjecture} There is a morphism $(\oF^R)^\nu\to \overline{\bD/\Gamma}^\mathfrak{F}$
to some toroidal compactification, for an appropriately chosen fan $\mathfrak{F}$. \end{conjecture}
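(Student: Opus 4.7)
The plan is to follow the framework pioneered by Alexeev and collaborators for elliptic K3 surfaces \cite{alexeev2020compactifications-moduli} and adapt it to the $p_g=q=1$ setting. The core idea is to verify that $R=s+\sum_{i=1}^{12}f_i$ is a \emph{recognizable divisor} in the sense of Alexeev--Engel, so that the limit stable pair $(\oS_0,\epsilon R_0)$ encodes enough data to pin down the limit period point in some toroidal compactification.

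First, I would classify the boundary strata of $\oF^R$ combinatorially. The codimension-one strata should decompose into the Type II$_b$ and Type II$_f$ loci studied in Sections 2 and 4 (and possibly hybrids). Deeper strata arise from iterated degenerations --- either simultaneously sending $j(C)\to\infty$ while degenerating the Weierstrass fibration to a nodal generic fiber, or via further collisions among the $12$ singular fibers. Each such stratum is parameterized by the combinatorial type of the dual complex of $\oS_0$ together with the configuration of the limits of $s$ and of the $f_i$ among the components.

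Second, for each stratum I would extract the limit Hodge data using the mixed Hodge theoretic machinery of Section \ref{sec:mhs}, extending Proposition \ref{period-prop}. At Type II this produces a point in $\mathrm{Hom}(E_8,\cE)$, identified with the boundary divisor of $\bD$ indexed by a rank-two isotropic sublattice $I\subset I\!I_{2,10}$; cones of $\mathfrak{F}$ lying over this divisor should correspond to further degenerations of the elliptic curve $\cE$. For Type III strata one matches to cones corresponding to larger isotropic sublattices, with the nilpotent operator $N$ of correspondingly higher rank. The fan $\mathfrak{F}$ is then chosen so that its cones are in bijection with the combinatorial types appearing in the KSBA stratification.

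Third, I would assemble the morphism via the valuative criterion. Given any one-parameter family $\mathrm{Spec}(R)\to(\oF^R)^\nu$ meeting a boundary stratum, the nilpotent orbit theorem \cite{schmid1973variation} produces a unique limit point in $\overline{\bD/\Gamma}^\mathfrak{F}$, and recognizability of $R$ should guarantee that this limit depends only on the stable pair. Normality of $(\oF^R)^\nu$ then upgrades this set-theoretic compatibility to an algebraic morphism.

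The main obstacle is proving that $R$ is in fact recognizable at the deep boundary strata. Two issues are particularly delicate: (a) controlling how the $12$ singular fibers partition among irreducible components of $\oS_0$ (e.g.\ the pairing $2+2+2+2$ at cusps versus isolated nodes in Type II$_f$), and (b) verifying that the Prym-type period data encoded by the limit pair determines the nilpotent orbit up to the action of $\Gamma$. The K3 analogue required substantial case-by-case combinatorial work, and a similar analysis --- possibly enlarging $R$ by additional canonically defined divisors if $R$ alone proves insufficient at Type III --- is the most plausible route to settle the conjecture.
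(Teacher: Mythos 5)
The statement you are addressing is left open in the paper --- it is stated as a conjecture precisely because the authors do not have a proof --- so there is no argument of theirs to compare against. Your proposal does identify what is almost certainly the intended strategy (the recognizable-divisor framework used for elliptic K3 surfaces in \cite{alexeev2020compactifications-moduli}, limit mixed Hodge structures via Clemens--Schmid and the nilpotent orbit theorem, and an extension argument using normality of $(\oF^R)^\nu$), and it correctly isolates why $R=s+\sum f_i$ rather than $s$ alone is needed: the appendix itself notes that the stable model with divisor $\epsilon s$ forgets the Type II$_f$ period data. But as written this is a research program, not a proof: every step that carries actual content is deferred. You do not prove that $R$ is recognizable; you do not classify the strata of $\oF^R$ (Type III and hybrid degenerations, collisions of singular fibers, non-normal or non-reduced limits of the Weierstrass model); you do not construct the fan $\mathfrak{F}$, and in particular you do not verify that the cones you would attach to combinatorial types of stable pairs actually form a $\Gamma$-admissible fan (closed under faces, finitely many orbits, compatibility with the $\Gamma$-action) --- in the K3 case this is exactly where the heavy combinatorial work lies.

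Two further points where the sketch would need real argument rather than assertion. First, ``normality upgrades set-theoretic compatibility to a morphism'' is not automatic: what one actually shows is that the period map extends over a big open subset after the family extends (e.g.\ via a simultaneous semistable or Kulikov-type model over the stratum), and then one needs properness of the target plus a separate argument (typically: the map is defined away from codimension two on a normal source and the target compactification is chosen so the extension exists, or one dominates $(\oF^R)^\nu$ by a space where the extended period map is manifestly regular and checks it descends). Second, for Type II$_f$ the limit pair only determines the period point up to the finite ambiguities you yourself flag (the $E[2]$-torsor from $D_8\subset E_8$, the sign/ordering ambiguities in $\mathfrak S_8^{\pm}$), and the conjecture asks for a morphism from the normalization of $\oF^R$, not from a finite cover; you would need to show these finite ambiguities are resolved by the $\Gamma$-action on $\overline{\bD/\Gamma}^{\mathfrak{F}}$ or else modify the statement. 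So the proposal is a reasonable plan of attack, but it does not settle the conjecture, and its plausibility hinges entirely on the deferred recognizability and fan-construction steps.
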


\bibliographystyle{amsalpha}
\bibliography{main}
\vspace{10pt}

\end{document}